\newtheorem{theorem}{Theorem}[section]
\newtheorem{lemma}[theorem]{Lemma}
\newtheorem{corollary}[theorem]{Corollary}
\theoremstyle{definition}
\newtheorem{definition}[theorem]{Definition}
\newtheorem*{example*}{Example}
\newtheorem{remark}[theorem]{Remark}
\numberwithin{equation}{section}
\newcommand{\EE}{\mathcal{E}}
\newcommand{\FF}{\mathcal{F}}
\newcommand{\II}{\mathcal{I}}
\newcommand{\LL}{\mathcal{L}}
\newcommand{\RR}{\mathcal{R}}
\newcommand{\N}{\mathbb{N}}
\newcommand{\R}{\mathbb{R}}
\newcommand{\frakM}{\mathfrak{M}}
\newcommand{\embed}{\hookrightarrow}
\newcommand{\weakly}{\rightharpoonup}
\newcommand{\weak}{\rightharpoonup}
\renewcommand{\d}{\mathrm{d}}
\newcommand{\dual}[2]{\langle #1 , #2 \rangle}
\def\clap#1{\hbox to 0pt{\hss#1\hss}}
\newcommand{\Diss}{\operatorname{Diss}}
\newcommand{\abst}{\operatorname{dist}}
\newcommand{\BV}{\mathrm{BV}}
\begin{document}

\title[On a lack of stability of parametrized BV solutions]{On a lack of stability of parametrized BV solutions to rate-independent systems with non-convex energies and discontinuous
 loads}\thanks{This research was supported by the German Research Foundation (DFG) under grant 
number~ME~3281/9-1 within the priority program Non-smooth and Complementarity-based
Distributed Parameter Systems: Simulation and Hierarchical Optimization (SPP~1962).}

\author{Merlin Andreia} \address{Technische Universit\"at Dortmund, Fakult\"at f\"ur
  Mathematik, Lehrstuhl LSX, Vogelpothsweg 87, 44227 Dortmund, Germany}
\email{merlin.andreia@tu-dortmund.de}

\author{Christian Meyer} \address{Technische Universit\"at Dortmund, Fakult\"at f\"ur
  Mathematik, Lehrstuhl LSX, Vogelpothsweg 87, 44227 Dortmund, Germany}
\email{christian2.meyer@tu-dortmund.de}

\subjclass[2010]{35B30, 34A60, 74C05, 34A12} 
\date{\today} 
\keywords{Rate-independent systems; discontinuous loads; stability of solutions; paramterized BV solutions; local solutions}

\begin{abstract} 
   We consider a rate-independent system with nonconvex energy under discontinuous external loading. The underlying space is finite dimensional 
   and the loads are functions in $BV([0,T];\R^d)$. We investigate the stability of various solution concepts w.r.t.\ a sequence of loads 
   converging weakly$*$ in $BV([0,T];\R^d)$ with a particular emphasis on the so-called normalized, $\mathfrak{p}$-parametrized 
   balanced viscosity solutions. By means of two counterexamples, it is shown that common solution concepts are not stable 
   w.r.t.\ weak$*$ convergence of loads in the sense that a limit of a sequence of solutions associated with these loads need not be a solution 
   corresponding to the load in the limit. We moreover introduce a new solution concept, which is stable in this sense, but our examples show 
   that this concept necessarily allows ``solutions'' that are physically meaningless.
\end{abstract}

\maketitle


\section{Introduction}

This paper is concerned with a rate-independent system of the form 
\begin{equation}\label{eq:ris}
    0 \in \partial \RR(\dot z(t)) + D_z \II(t, z(t)) \quad \text{f.a.a.\ } t \in (0,T), 
    \quad z(0) = z_0,
\end{equation}
where $\RR: \R^d \to \R$ is convex and positive 1-homogeneous and $\II$ denotes an energy given by 
\begin{equation*}
    \II(t, z) := \frac{1}{2}\,\dual{A z}{z} + \FF(z) - \dual{\ell(t)}{z}.
\end{equation*}
Herein $A \in \R^{d\times d}$ is symmetric and positive definite and $\FF: \R^d \to \R$ is smooth, but potentially non-convex. 
Moreover, the external load $\ell: [0,T] \to \R^d$ is a given function of bounded variation. The precise assumptions on the data 
are specified in Section~\ref{sec:assu} below.

It is well known that, even for smooth external loads, one cannot expect the existence of a weakly differentiable solution such that 
\eqref{eq:ris} is satisfied for almost all $t\in (0,T)$. We refer to the counterexample in \cite[Section~2.3]{sthomas}. 
For that reason, several alternative solution concepts have been developed, among them \emph{local solutions}, \emph{global energetic
solutions}, and \emph{(parametrized) balanced viscosity (BV) solutions}. For a comprehensive overview, we refer to \cite{MRRIS}. 
Originally, all these concepts have been introduced for smooth external loads, but, recently, several authors came up with
extensions of the classical solution concepts adapted to loads in $BV([0,T]; \R^d)$. Concerning parametrized BV solutions, we refer to \cite{KneesZanini}.
With regard to other solutions concepts, rate-independent systems with discontinuous data are 
discussed e.g.\ in \cite{Krejci2009, Rec11, Rec16, RS18} for the case of strictly convex energies.  
A comparison of various solutions concepts in case of discontinuous data is presented in \cite{KR14}.
The motivation for considering loads in $BV([0,T];\R^d)$ is manifold, reaching from applications, where loads may be switched on or off, to 
the viscous approximation of optimal control problems governed by \eqref{eq:ris}. Concerning the latter, the so-called reverse approximation property 
plays an essential role and has so far only been verified under very restrictive assumptions, see \cite[Section~6]{KMS22}. 
If one aims to avoid these assumptions, the use of loads in $BV([0,T];\R^d)$ seems to be indispensable, cf.\ \cite[Remark~6.4]{KMS22}.

Within this contribution, we investigate the stability of solutions to \eqref{eq:ris} w.r.t.\ ``natural'' notions of convergence of loads in 
$BV([0,T];\R^d)$. In generic situations, one cannot expect a sequence of loads in $BV([0,T];\R^d)$ to converge strongly in $BV([0,T];\R^d)$. 
For instance, if one discretizes a given load in $BV([0,T];\R^d)$ by means of a sequence of piecewise constant functions, then this sequence 
will in general only converge weakly$*$ in $BV([0,T];\R^d)$ as the mesh size tends to zero (unless the jump set of the load is 
exactly covered by the mesh). And even, if one considers the smoothing of a load in 
$BV([0,T;\R^d)$ by convolution with the standard mollifier, one will in general only obtain intermediate convergence of the sequence of 
mollified loads, when the regularization parameter is driven to zero. Consequently, ``generic'' notions of convergence in $BV([0,T];\R^d)$ 
are weak$*$ convergence or, at best, intermediate convergence. It is therefore reasonable to investigate if established solutions concepts 
are stable w.r.t.\ these types of convergence. The crucial question in this context is the following:\\
\emph{If one considers a sequence of loads converging 
weakly$*$ (or in the intermediate topology) and a sequence of associated solutions according to one of the established solution concepts, 
which converges to a limit function, is this limit still a solution in the sense of the respective concept?}\\
We will show by means of two examples that the answer to this question is in general negative. To be more precise, 
the notion of normalized $\mathfrak{p}$-parametrized BV solutions is even not stable w.r.t.\ intermediate convergence of the loads, whereas 
the notion of local solutions is not stable w.r.t.\ weak$*$ convergence. We therefore propose a new ``relaxed'' solution concept that 
is stable w.r.t.\ weak$*$ convergence of loads in $BV([0,T];\R^d)$. However, as especially the second example in Section~\ref{sec:extwo} 
shows, this new solution concept admits functions as solutions, which are entirely meaningless from a physical point of view.
It is therefore doubtful if it makes sense to allow for discontinuous loads in $BV([0,T];\R^d)$ in the context of rate-independent systems of the 
form \eqref{eq:ris}.

The paper is organized as follows: 
The rest of this introductory section is dedicated to the notation and our standing assumptions on the data in \eqref{eq:ris}. 
We then recall the concept of normalized $\mathfrak{p}$-parametrized BV solutions in case of loads in $BV([0,T];\R^d)$ in 
Section~\ref{sec:kneeszanini}. Thereafter, in Section~\ref{sec:acounterexample}, a first example is presented, which shows that 
this solution concept is not stable w.r.t.\ intermediate convergence of the loads. 
Section~\ref{sec:relax} is then devoted to our new relaxed solution concept and its stability w.r.t.\ weak$*$ convergence of the loads.
In Section~\ref{sec:plausibility} we investigate this new solution concept in more details, especially its relation to other solution concepts. 
We prove that, under natural assumption, every local solution is also a relaxed solution, which demonstrates that the new concept is 
rather weak. This is also underlined by the second example in Section~\ref{sec:extwo}, which shows that the new solution concepts 
provides completely unphysical ``solutions''. The paper ends with a short conclusion and an appendix on auxiliary technical results.

\subsection{Notation}
Throughout the paper $\langle \cdot,\cdot \rangle$ denotes the standard scalar product in $\R^d$ and the induced euclidean norm is given by $\|\cdot\|=\sqrt{ \langle \cdot,\cdot\rangle}$. $C$ is a generic constant larger zero and for a function $f:[a,b]\to\R^d$ the total variation is defined by 
\begin{align*}
	\mbox{Var}(f;[a,b])=\sup_{\text{\tiny partitions $\{t_k\}$ of $[a,b]$}}\sum_{k}\|f(t_k)-f(t_{k-1})\|,  
\end{align*}
where a partition $\{t_k\}$ is a finite subset of $[a,b]$ with $a=t_0<t_1<\dots<t_{n-1}<t_n=b,~n\in\N$. 

For a left (resp.\ right) continuous function $f: \R\to \R^d$ we denote the one sided limits by 
$f(s-) := \lim_{\sigma \nearrow s} f(\sigma)$ and $f(s+) := \lim_{\sigma \searrow s} f(\sigma)$, respectively.

For convenience of the reader, let us collect some well known facts on functions of bounded variation that will be useful throughout the paper. 
For the proofs, we refer to \cite{Attouch, Natanson}.
By $BV([0,T];\R^d)$ we denote the space of functions with bounded total variation, i.e., 
\begin{equation*}
    BV([0,T];\R^d) := \{ f: [0,T] \to \R^d \, : \, \mbox{Var}(f;[0,T]) < \infty \}.
\end{equation*}
It is well known that a function in $BV([0,T];\R^d)$ is measurable and bounded and admits at most a countable number of discontinuities.
Thus, a function in $BV([0,T];\R^d)$ is absolutely integrable and we can equip $BV([0,T];\R^d)$ with the norm
$\|f\|_{BV([0,T];\R^d)} := \|f\|_{L^1(0,T;\R^d)} + \mbox{Var}(f;[0,T])$.
Moreover, we introduce the space of (equivalence classes of) functions of bounded variation on $(0,T)$ as
\begin{equation*}
    \BV(0,T;\R^d) := \{ f \in L^1(0,T;\R^d) : \, D f \in \mathfrak{M}(0,T;\R^d) \},
\end{equation*}
where $D f$ denotes the distributional derivative and $\mathfrak{M}(0,T;\R^d)$ is the space of $\R^d$-valued regular Borel measures on $(0,T)$.
We equip $\BV(0,T;\R^d)$ with the norm $\|f\|_{\BV(0,T;\R^d)} := \|f\|_{L^1(0,T;\R^d)} + |D f|(0,T)$, 
where $|Df|(0,T)$ denotes the total variation of the measure $Df$. It is well known that 
\begin{equation}\label{eq:equivvar}
    |Df|(0,T) = \inf \{ \mbox{Var}(v;[0,T]) : \, \text{$v$ is a representative of $[f]$} \} .
\end{equation}
Furthermore, $\BV(0,T;\R^d)$ embeds continuously in $L^\infty(0,T;\R^d)$ and compactly in every $L^p(0,T;\R^d)$, $p < \infty$.
Moreover, every function $f\in BV([0,T];\R^d)$ is a representative of an element in $\BV(0,T;\R^d)$ and,
in view of \eqref{eq:equivvar}, it holds 
\begin{equation}\label{eq:BVnormest}
    \|f\|_{BV([0,T];\R^d)} \geq \|f\|_{\BV(0,T;\R^d)}.
\end{equation}
Here and in the following, with a little abuse of notation, 
we denote the function in $BV([0,T];\R^d)$ and the associated equivalence class in $\BV(0,T;\R^d)$ by the same symbol.
Note that \eqref{eq:BVnormest} is only satisfied with equality by particular representatives of $f$, e.g., by the left or right continuous
representative. As usual, we call a sequence $(f_n)_{n\in \N} \subset \BV(0,T;\R^d)$ weakly$*$ converging in $\BV(0,T;\R^d)$, iff 
\begin{equation*}
    f_n \weak^* f \quad\text{in } \BV(0,T;\R^d) 
    \quad :\Longleftrightarrow \quad 
    \left\{\;
    \begin{aligned}
        f_n & \to f & &\text{in } L^1(0,T;\R^d),\\
        D f_n & \weak^* D f & &\text{in } \frakM(0,T;\R^d). 
    \end{aligned}
    \right.        
\end{equation*}
Every bounded sequence in $\BV(0,T;\R^d)$ contains a weakly$*$ converging subsequence and, by \eqref{eq:BVnormest}, 
the same holds true for a bounded sequence in $BV([0,T];\R^d)$ (and the associated sequence of equivalence classes, respectively).
Furthermore, by Helly's selection principle, every bounded sequence in $BV([0,T];\R^d)$ admits a subsequence that converges pointwise 
everywhere in $[0,T]$ and, if the subsequence converges weakly$*$ in $\BV(0,T;\R^d)$, too, 
the pointwise limit is a representative of the weak$*$ limit.
It is to be noted however that the pointwise limit need not be the representative attaining the infimum in \eqref{eq:equivvar}, even 
if this is true along the converging subsequence.
Finally, we say that a sequence $(f_n)_{n\in \N}$ converges w.r.t.\ intermediate convergence in $\BV(0,T;\R^d)$ and $BV([0,T];\R^d)$, 
respectively, if it converges weakly$*$ and, in addition, $|D f_n|(0,T) \to |D f|(0,T)$ as $n\to \infty$.

\subsection{Standing Assumptions}\label{sec:assu}

\subsubsection*{Energy}
Throughout the paper, $A\in\R^{d\times d}$ is symmetric and positively definite. Furthermore, $\FF:\R^d\to\R$ satisfies
\begin{align}
	&\quad\FF\in C^2(\R^d;\R),~\FF\geq 0 \label{eq:F1}\\
	&\|D^2\FF(z)v\|\leq C(1+\|z\|^q)\|v\|~\forall v,z\in\R^d \label{eq:F2} 
\end{align}  
for some $q\geq1$. 
For a given external load $\ell\in BV([0,T];\R^d)$, the energy functional $\II:[0,T]\times \R^d\to\R$ reads 
\begin{equation*}
    \II(t,z) := \frac{1}{2}\,\langle Az,z\rangle +\FF(z)-\langle\ell(t),z\rangle .
\end{equation*}
By denoting the time independent part as 
\begin{equation}\label{eq:defEE}
	\EE(z):= \frac{1}{2}\,\langle Az,z\rangle +\FF(z),
\end{equation}
the energy can be written as $\II(t,z)=\EE(z)-\langle\ell(t),z\rangle$. 
Below we will deal with other loads, denoted by $\hat{\ell},\hat{\ell}_n\in BV([0,S];\R^d)$, too, and so, for convenience, we define 
\begin{align}
	\hat{\II}(s,z)&=\EE(z)-\langle \hat{\ell}(s),z\rangle, \label{eq:defhatI}\\
	\hat{\II}_n(s,z)&=\EE(z)-\langle \hat{\ell}_n(s),z\rangle.
\end{align}

\subsubsection*{Dissipation}
For the dissipation $\RR:\R^d\to [0,\infty)$, we assume
\begin{align}
	&\RR \mbox{~is~proper,~convex,~and~lower~semicontinuous}, \label{R1}\\
	&\RR \text{ is positive 1-homogeneous, i.e., }  
	\RR(\lambda z)=\lambda\RR(z) ~\forall \,z\in\R^d,~\lambda>0, \label{R2}\\
 	&\exists \,c, C >0, \mbox{~such~that}~c\,\|z\|\leq\RR(z)\leq C\,\|z\|~\forall \,z\in\R^d.  \label{R3}
\end{align}

\subsubsection*{Initial data}
There exists $\ell_0\in\R^d$ such that $-D_z\EE(z_0)+\ell_0\in\partial\RR(0)$.

\section{A solution concept for discontinuous loads in $BV$}\label{sec:kneeszanini}

There exists a variety of solution concepts for \eqref{eq:ris}. Here we only present a brief overview with a special emphasis on 
parametrized balanced viscosity solutions and refer to \cite{MRRIS} and the references therein for more details. 
The most natural notion of solutions is the \emph{differential solution}, where one searches 
for a weakly differentiable function $z\in W^{1,1}(0,T;\R^d)$, which satisfies \eqref{eq:ris} almost everywhere. 
The disadvantage of this concept is that one cannot guarantee the existence of such a solution in case of a non-convex energy as the 
example in \cite[Section~2.3]{sthomas} shows. For that reason, several alternative solutions concepts have been developed.
The most prominent one is probably the concept of \emph{global energetic solutions}, where the subdifferential inclusion in 
\eqref{eq:ris} is replaced by a global stability condition and an energy balance, which benefits in solvability under milder assumptions, 
see e.g.\ \cite{MT99, MTL02, MT04}. 
One of the weakest solution concepts providing only a minimum of information
is the so called \emph{local solution} concept, where a solution $z$ only satisfies a local stability condition together with an energy inequality.
We will come back to this concept in Definition~\ref{def:locsol} below.
Another approach providing more information especially about the discontinuities of a solution
is given by the concept of \emph{parametrized balanced viscosity (BV) solutions}, 
where a solution consists of the tuple $(z,t)$, representing the state and the physical time,
respectively, as functions of a curve parameter $s$. To the best of our knowledge, it was first introduced in \cite{EM06}, 
but has by now been analyzed by 
various authors in multiple aspects, we only refer to \cite{MRS12} and the reference therein.       

Initially, all these concepts have been developed for smooth external loads, but recently some of them have been transferred 
to the case of discontinuous loads in $BV([0,T])$. Concerning global energetic solutions, we exemplarily refer to \cite{Krejci2009}.
In \cite{KneesZanini} the concept of parametrized BV solutions concept has been transferred to loads in $BV([0,T])$. 
In our setting, where the underlying space is finite dimensional, it reads as follows:

\begin{definition}\label{def:paramsol}
    Let $\ell \in BV([0,T];\R^d)$ be given. 
	A triple $(S,\hat{t},\hat{z},\hat{\ell})\in(0,\infty)\times W^{1,\infty}(0,S)\times W^{1,\infty}(0,S;\R^d)\times BV([0,S];\R^d)$ 
	is called \emph{normalized, $\mathfrak{p}$-parametrized balanced viscosity (BV) solution} of the 
	rate-independent system \eqref{eq:ris} associated with $\ell$, if 
    \begin{itemize}
    	\item the \emph{initial and end time condition} hold
    \begin{equation}
       \hat{t}(0)=0,~ \hat{t}(S)=T,~ \hat{z}(0)=z_0, \label{eq:initendcond}
    \end{equation}
    \item the \emph{complementarity relations and normalization condition} are fulfilled for almost all $s\in(0,S)$
    \begin{gather}
        \hat{t}'(s)\geq 0,~\hat{t}'(s)\abst(-D_z\hat{\II}(s,\hat{z}(s)),\partial\RR(0))=0, \label{eq:complementary}\\
		\hat{t}'(s)+\RR(\hat{z}'(s))+\|\hat{z}'(s)\|\abst(-D_z\hat{\II}(s,\hat{z}(s)),\partial\RR(0))=1, \label{eq:normalization}
    \end{gather}
	\item the \emph{energy identity} is valid for all $s_1, s_2\in[0,S]$
    \begin{equation}\label{eq:energy}
	\begin{aligned}
		\EE(\hat{z}(s_2))+\int_{s_1}^{s_2}\RR(\hat{z}'(r))+\|\hat{z}'(r)\|\abst(-D_z\hat{\II}(r,\hat{z}(r)),\partial\RR(0)) dr \qquad\qquad & \\[-1ex]
		=\EE(\hat z(s_1))+\int_{s_1}^{s_2}\langle \hat{\ell}(r),\hat{z}'(r)\rangle dr,
	\end{aligned}    
    \end{equation}
	\item the parametrized load $\hat{\ell}$ is compatible with $\ell$ in the following sense: for every $t_*\in[0,T]$, 
	there exists $s_*\in\hat{t}^{-1}(t_*)$ such that for all $s\in \hat{t}^{-1}(t_*)$
	\begin{equation}
		\hat{\ell}(s)=\begin{cases}	\ell( t_*-), &s<s_*,\\
							\ell( t_*+), &s>s_*,
				  \end{cases}\qquad
				  \mbox{ and } \qquad
				  \hat{\ell}(s_*)\in \{\ell(t_*),\ell(t_*-),\ell(t_*+)\}.\label{eq:defell}
	\end{equation}
 \end{itemize}
 Here and in the following, we set $0- := 0$ and $T+ := T$.
\end{definition}

\begin{remark}
    The above definition differs from the ``classical'' notion of normalized, $\mathfrak{p}$-parametrized BV solutions 
    for smooth external loads according to, e.g., \cite{MRS12}  only in the additional 
    compatibility condition in \eqref{eq:defell}. This condition basically says that, if $\ell$ is discontinuous in a viscous jump, then it only attains 
    its left and right limits during the viscous jump. Note that, if $\ell$ is continuous in $t^*$, 
    then $\hat \ell(s) = \ell(\hat t (s))$ for all $s\in \hat t^{-1}(t^*)$.
\end{remark}

A proof concerning the existence of such solutions can be found in \cite[Prop. 4.2]{KneesZanini}. The authors used a vanishing viscosity approach and showed that a parametrized version of
the solutions solving the regularized problems, which are 
\begin{align*}
	0\in\partial\RR(\dot{z}_\epsilon(t))+\epsilon\dot{z}_\epsilon(t)+D_z\II(t,z_\epsilon(t)),\quad z(0)=z_0,\quad \epsilon>0,
\end{align*}
convergence to a $\mathfrak{p}$-parametrized BV solution for passing the viscosity parameter $\epsilon\to 0$. This parametrization is done by means of the vanishing viscosity contact 
potential 
\begin{equation*}
	\mathfrak{p}:\R^d\times\R^d\to\R,\quad \mathfrak{p}(v,w)=\RR(v)+\|v\|\abst(w,\partial\RR(0)),
\end{equation*}
which also motivates the name of the solution concept. To be more precise, the parametrization is given by
\begin{equation*}
	s_\epsilon(t)=t+\int_0^t \mathfrak{p}(\dot{z}_\epsilon(r),-D_z\II(r,\dot{z}_\epsilon(r)) dr, \quad \hat{z}_\epsilon=z_\epsilon(\hat{t}_\epsilon(s)),\quad 
	\hat{\ell}_\epsilon(s)=\ell(\hat{t}_\epsilon(s)),
\end{equation*}
where $\hat{t}_\epsilon:[0,S_\epsilon]\to[0,T]$ with $S_\epsilon=s_\epsilon(T)$ is the inverse function of $s_\epsilon$, so that $\mathfrak{p}$ appears in the normalization condition 
\eqref{eq:normalization}, i.e. $\hat{t}(s)+\mathfrak{p}(\hat{z}'(s),-D_z\hat{\II}(s,\hat{z}(s)))=1$ for almost all $s\in(0,S)$.

\subsection{A first counterexample}\label{sec:acounterexample}

In this section we discuss an example, which demonstrates that the solution concept from Definition \ref{def:paramsol} 
is not stable w.r.t.\ intermediate convergence of a sequence of loads $(\ell_n)_{n\in \N}$ in $BV([0,T])$ in the following sense: 
we construct a sequence of solutions $(S_n,\hat{t}_n,\hat{z}_n,\hat{\ell}_n)_{n\in\N}$ associated with $\ell_n$ in the sense of 
Definition~\ref{def:paramsol}, which converges (weakly) to a limit, but this limit is no solution associated with the limit of $(\ell_n)_{n\in\N}$ w.r.t. intermediate convergence.
Our example is one-dimensional and we consider the following dissipation and energy:
\begin{align}\label{eq:dissandenergy}
	 \RR(z)= |z|,\qquad \EE(z) = \frac{1}{2}z^2-z,\qquad \II(t,z)=\EE(z)-\ell(t)z,
\end{align}
where $\ell\in BV([0,2])$. The initial value and end time are set to $z_0=0$ and $T=2$. The sequence of external loads given by
\begin{equation*}
	\ell_n(t)=\begin{cases}
				0,& t\in[0,1]\\
				\frac{n}{2}t-\frac{n}{2}, &t\in(1,1+\frac{1}{n})\\
				\frac{1}{2}, &t\in[1+\frac{1}{n},2].
			\end{cases}
\end{equation*}
Then $\ell_n$ converges to 
\begin{align}
	\ell=\begin{cases} 0, &t\in [0,1]\\ \frac{1}{2}, &t\in (1,2]\end{cases}\label{eq:limell}
\end{align}
 w.r.t. intermediate convergence in $BV([0,T])$.
By direct calculations one verifies that a normalized, $\mathfrak{p}$-parameterized BV solution associated with $\ell_n$ is given by
\begin{equation}\label{eq:hatzex}
    \hat{z}_n(s)=
    \begin{cases}
        0, &s\in[0,1],\\
        \frac{\frac{n}{2}}{1+\frac{n}{2}}s-\frac{\frac{n}{2}}{1+\frac{n}{2}}, &s\in (1,\frac{3}{2}+\frac{1}{n}),\\    
        \frac{1}{2}, &s\in[\frac{3}{2}+\frac{1}{n},\frac{5}{2}],
    \end{cases}
\end{equation}
\begin{equation}\label{eq:hattex}
    \hat{t}_n(s)=
    \begin{cases}
        s, &s\in[0,1],\\
        \frac{1}{1+\frac{n}{2}}s+\frac{\frac{n}{2}}{1+\frac{n}{2}}, &s\in (1,\frac{3}{2}+\frac{1}{n}),\\
        s-\frac{1}{2}, &s\in[\frac{3}{2}+\frac{1}{n},\frac{5}{2}],
    \end{cases}
\end{equation}
along with $S_n=\frac{5}{2}$ and
\begin{equation}\label{eq:hatellex}
    \hat{\ell}_n(s)=\ell_n(\hat{t}_n(s))=
    \begin{cases}
        0, &s\in[0,1],\\
        \frac{\frac{n}{2}}{1+\frac{n}{2}}s-\frac{\frac{n}{2}}{1+\frac{n}{2}}, &s\in (1,\frac{3}{2}+\frac{1}{n}),\\
        \frac{1}{2}, &s\in[\frac{3}{2}+\frac{1}{n},\frac{5}{2}].
    \end{cases}
\end{equation}
The pointwise-a.e.\ limits of these sequences for $n\to \infty$ read
    \begin{equation}\label{eq:limitzt}
		\hat{z}(s)=\begin{cases}
					0, &s\in[0,1],\\
					s-1, &s\in (1,\frac{3}{2}),\\
					\frac{1}{2}, &s\in[\frac{3}{2},\frac{5}{2}],
					\end{cases}\quad
		\hat{t}(s)=\begin{cases}
					s, &s\in[0,1],\\
					1, &s\in (1,\frac{3}{2}),\\
					s-\frac{1}{2}, &s\in[\frac{3}{2},\frac{5}{2}],
					\end{cases}
    \end{equation}
\begin{equation}\label{eq:limitell}
		\hat{\ell}(s)=\begin{cases}
					0, &s\in[0,1],\\
					s-1, &s\in (1,\frac{3}{2}),\\
					\frac{1}{2}, &s\in[\frac{3}{2},\frac{5}{2}].
					\end{cases}    
\end{equation}					
Note that $\hat z$ and $\hat t$ are not only the pointwise limits of $\hat z_n$ and $\hat t_n$, but also the strong limits in $H^1(0,S)$ and 
weak* limits in $W^{1,\infty}(0,S)$. Moreover, $\hat\ell$ is the limit of $\hat\ell_n$ w.r.t.\ intermediate convergence in $BV([0,S])$.
However, $(S,\hat{t},\hat{z},\hat{\ell})$ is no normalized, $\mathfrak{p}$-parametrized BV solution associated to the limit $\ell$ from \eqref{eq:limell} 
according to Definition \ref{def:paramsol}, since condition \eqref{eq:defell} is violated in the viscous jump $(1,\frac{3}{2})$.
Here $\hat{\ell}$ cannot be expressed by the left or right 
hand side limit of $\ell$, i.e., $\hat{\ell}(s)\notin\{\ell(\hat{t}(s)-),\ell(\hat{t}(s)),\ell(\hat{t}(s)+)\}$ for all $s\in(1,\frac{3}{2})$.

\section{A relaxed solution concept}\label{sec:relax}
The example in the previous section shows that the limit of the composition $\ell_n\circ\hat{t}_n$ does in general not coincide 
with the composition of the separate limits $\ell\circ\hat{t}$ in jumps. Hence, if we aim for a solution concept 
which is stable w.r.t.\ weak* convergence of the external loads $\ell_n$, we have to weaken the requirements on $\hat{\ell}$.
To be more precise, the above example shows that one can hardly impose any condition on $\hat \ell$ in jumps. 
For this reason, we drop the compatibility condition in \eqref{eq:defell} and replace it by a less restrictive condition
that does not provide any information on $\hat\ell$ in jumps.

\begin{definition}[Relaxed solution concept]\label{def:rexsol}
    Let $\ell \in BV([0,T];\R^d)$ be given. We call triple 
    $(S,\hat{t},\hat{z},\hat{\ell})\in(0,\infty)\times W^{1,\infty}(0,S)\times W^{1,\infty}(0,S;\R^d)\times BV([0,S];\R^d)$ 
	is called \emph{relaxed, normalized, $\mathfrak{p}$-parametrized BV solution} (or short simply \emph{relaxed solution}) of the 
	rate-independent system \eqref{eq:ris} associated with $\ell$, if 
	it satisfies \eqref{eq:initendcond}--\eqref{eq:energy} and, instead of the compatibility condition in \eqref{eq:defell}, we just require 
    \begin{equation}\label{eq:newdefell}
	    \hat{\ell}(s)=\ell(\hat{t}(s))~\mbox{for almost all } s\in M,
    \end{equation} 
    where $M$ is the set, where $\hat{t}$ is increasing, i.e., 
    \begin{equation}\label{eq:defsetM}
        M=\{s\in (0,S) : \hat{t}(s_1)<\hat{t}(s_2)~\mbox{for all } s_1, s_2 \in [0,S] \text{ with } s_1<s<s_2\}.
    \end{equation}
    Herein, we choose the continuous representative of $\hat{t}$ denoted by the same symbol for simplicity.
\end{definition}

The above definition indeed represents a generalization of the original solution concept in Definition~\ref{def:paramsol} as the next result shows.

\begin{lemma}\label{lem:rexparamsol}
    Let $\ell \in BV([0,T];\R^d)$ be given and suppose that 
    $(S,\hat{t},\hat{z},\hat{\ell})\in(0,\infty)\times W^{1,\infty}(0,S)\times W^{1,\infty}(0,S;\R^d)\times BV([0,S];\R^d)$ 
    is a normalized, $\mathfrak{p}$-parametrized BV solution in the sense of Definition~\ref{def:paramsol}. 
    Then it is also a relaxed solution in the sense of Defintion~\ref{def:rexsol}.
\end{lemma}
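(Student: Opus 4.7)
The conditions \eqref{eq:initendcond}--\eqref{eq:energy} appear identically in both definitions, so the entire task reduces to showing that the compatibility condition \eqref{eq:defell} implies the relaxed condition \eqref{eq:newdefell}. The plan is to exploit the fact that the only ``problematic'' sets (where \eqref{eq:defell} fails to pin down $\hat\ell$ to $\ell\circ\hat t$) are of Lebesgue measure zero within $M$.

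First I would unpack the structure of the continuous representative of $\hat t$. Since by \eqref{eq:complementary} we have $\hat t' \geq 0$ almost everywhere, $\hat t$ is non-decreasing. For each $t^* \in [0,T]$, the level set $\hat t^{-1}(t^*)$ is therefore either empty, a single point, or a closed non-degenerate interval $[s^-(t^*), s^+(t^*)]$. Let $J$ denote the (countable) set of those $t^*\in[0,T]$ for which $\hat t^{-1}(t^*)$ is non-degenerate. From the very definition \eqref{eq:defsetM} of $M$, if $s$ lies in the interior of some $[s^-(t^*), s^+(t^*)]$ with $t^*\in J$, one may choose $s_1,s_2$ in that same interval with $s_1<s<s_2$ and $\hat t(s_1)=\hat t(s_2)$, so $s\notin M$. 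Consequently
\[
    M\cap \hat t^{-1}(J) \subset \bigcup_{t^*\in J}\{s^-(t^*),s^+(t^*)\},
\]
which is countable, hence a Lebesgue null set. Therefore, for almost all $s\in M$, $\hat t^{-1}(\hat t(s))$ is the singleton $\{s\}$.

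Second, let $D\subset[0,T]$ denote the (countable) jump set of $\ell$. For each $t^*\in D$, the set $\hat t^{-1}(t^*)\cap M$ contains at most two points (either one, if the level set is a singleton, or the two endpoints of the corresponding constancy interval, by the previous step). Hence $M\cap \hat t^{-1}(D)$ is a countable set and, in particular, a null set in $M$.

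Combining both observations, for almost every $s\in M$ the point $t^*:=\hat t(s)$ is a continuity point of $\ell$ and satisfies $\hat t^{-1}(t^*)=\{s\}$. Applying the compatibility condition \eqref{eq:defell} at this $t^*$ forces $s_*=s$, and therefore
\[
    \hat\ell(s) = \hat\ell(s_*) \in \{\ell(t^*-),\, \ell(t^*),\, \ell(t^*+)\} = \{\ell(\hat t(s))\},
\]
where the last equality holds because $\ell$ is continuous at $t^*$. This gives \eqref{eq:newdefell} and completes the proof.

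The main, if minor, technical point is the measure-theoretic argument that the set of $s\in M$ landing either at a continuity-failure point of $\hat t^{-1}$ (i.e.\ inside a non-degenerate level set) or at a discontinuity of $\ell$ is at most countable; everything else is a direct consequence of \eqref{eq:defell} specialised to singleton level sets.
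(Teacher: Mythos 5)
Your proof is correct and follows essentially the same route as the paper: reduce everything to verifying \eqref{eq:newdefell}, show that $\hat t$ sends almost all of $M$ to continuity points of $\ell$ (since $J(\ell)$ is countable and $\hat t\big|_M$ has at most two preimages per value), and then read off $\hat\ell(s)=\ell(\hat t(s))$ there from \eqref{eq:defell}. You are in fact a bit more careful than the paper, which asserts that $\hat t^{-1}(\hat t(s_*))$ is a singleton for \emph{every} $s_*\in M$ and that $\hat t$ is one-to-one on $M$; both statements fail at the two endpoints of a constancy interval of $\hat t$, which do lie in $M$ as you note, but since these exceptions form a countable null set the conclusion is unchanged.
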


\begin{proof}
    We only have to verify the condition in \eqref{eq:newdefell}.
    For this purpose, let $s_*\in M$ be arbitrary and set $t_* := \hat t(s_*)$. By definition of $M$, $\hat t^{-1}(t_*)$ is a singleton and 
    therefore \eqref{eq:defell} implies $\hat \ell(s_*) \in \{\ell(t_*), \ell(t_*-), \ell(t_*+)\}$. Since $\ell$ is of bounded variation, it only 
    has countably many jumps collected in the set $J(\ell)\subset [0,T]$ and is continuous elsewhere.
    Since $J(\ell)$ is countable and $\hat t$ is one-to-one on $M$, $\hat t^{-1}(J(\ell))\cap M$ has zero measure and thus
    $\ell(\hat t(s)) = \ell(\hat t(s)-) = \ell(\hat t(s)+)$ f.a.a.\ $s\in M$, which implies \eqref{eq:newdefell}.
\end{proof}

Thanks to Lemma~\ref{lem:rexparamsol}, the existence result from \cite[Prop. 4.2]{KneesZanini} ensuring the existence of a 
normalized, $\mathfrak{p}$-parametrized BV solution immediately yields the existence of a relaxed solution.

\begin{corollary}
    Let $\ell \in BV([0,T];\R^d)$ be arbitrary. Then there exists at least one relaxed solution of \eqref{eq:ris} in the sense of 
    Definition~\ref{def:rexsol}.
\end{corollary}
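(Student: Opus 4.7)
The plan is to combine the two pieces already established. First, I invoke \cite[Proposition~4.2]{KneesZanini}, whose hypotheses are exactly our standing assumptions from Section~\ref{sec:assu} (symmetric positive definite $A$, $\mathcal F \in C^2$ satisfying \eqref{eq:F1}--\eqref{eq:F2}, a $1$-homogeneous coercive dissipation obeying \eqref{R1}--\eqref{R3}, and a stable initial datum $z_0$). For the given load $\ell \in BV([0,T];\R^d)$ this yields a tuple $(S,\hat t,\hat z,\hat\ell) \in (0,\infty) \times W^{1,\infty}(0,S) \times W^{1,\infty}(0,S;\R^d) \times BV([0,S];\R^d)$ that is a normalized, $\mathfrak{p}$-parametrized BV solution in the sense of Definition~\ref{def:paramsol}.

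Then I apply Lemma~\ref{lem:rexparamsol} directly: since the tuple satisfies \eqref{eq:initendcond}--\eqref{eq:energy} together with the full compatibility condition \eqref{eq:defell}, the lemma guarantees that it also fulfils the weaker requirement \eqref{eq:newdefell} on the monotonicity set $M$ defined in \eqref{eq:defsetM}. Hence $(S,\hat t,\hat z,\hat\ell)$ is a relaxed solution in the sense of Definition~\ref{def:rexsol}, which proves the corollary.

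There is essentially no technical obstacle here; the work has been front-loaded into Lemma~\ref{lem:rexparamsol}, whose proof already handled the only nontrivial point, namely that the countably many jumps of $\ell$ pull back under the continuous, monotone-on-$M$ function $\hat t$ to a set of measure zero in $M$. Once that is in place, the corollary is merely a bookkeeping statement that the existence theory of \cite{KneesZanini} transfers verbatim to the relaxed concept.
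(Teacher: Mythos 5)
Your proof matches the paper's argument exactly: invoke the existence result of \cite[Prop.~4.2]{KneesZanini} to produce a normalized, $\mathfrak{p}$-parametrized BV solution, then apply Lemma~\ref{lem:rexparamsol} to conclude it is also a relaxed solution. This is precisely how the corollary is deduced in the text immediately following Lemma~\ref{lem:rexparamsol}, so no further review is needed.
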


\subsection{Stability of the relaxed solution concept}\label{sec:stability}
With our relaxed solution concept at hand we are able to prove the following stability result.

\begin{theorem}\label{thm:stability}
    Let $(\ell_n)_{n\in \N}$ be a bounded sequence in $BV([0,T];\R^d)$ and consider a sequence 
    $(S_n,\hat{t}_n,\hat{z}_n,\hat{\ell}_n)$ of relaxed solutions associated with $\ell_n$. 
    Assume moreover that the sequence $(\|\hat \ell_n\|_{BV([0,S_n];\R^d)})_{n\in \N}$ is bounded.
    Then there exists a subsequence (also denoted with the index $n$) such that 
	\begin{equation}\label{eq:weakstarconv}
		S_n\to S,\quad\hat{z}_n\rightharpoonup^* \hat{z}~\mbox{in}~W^{1,\infty}(0,S;\R^d),
		\quad\hat{t}_n\rightharpoonup^* \hat{t}~\mbox{in}~W^{1,\infty}(0,S)	
	\end{equation}
	and
    \begin{alignat}{6}
        \hat{\ell}_n & \weakly^*\hat{\ell} & \quad & \text{in } \BV(0,S;\R^d),  \quad  & \ell_n & \weakly^* \ell & \quad & \text{in } \BV(0,T;\R^d), \\
        \hat{\ell}_n(s) &\to \hat{\ell}(s) &\quad & \text{for all } s\in[0,S], \quad & \ell_n(t) & \to \ell(t) &\quad & \text{for all } t\in[0,T]. \label{eq:ptconv}
    \end{alignat}
	Herein the functions are constantly extended if $S_n<S$.
	Furthermore, the limit $(S,\hat{t},\hat{z},\hat{\ell})$ is a relaxed solution associated with $\ell$.
\end{theorem}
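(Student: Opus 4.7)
The plan is to proceed in three stages: (i) exploit the normalization condition to derive uniform bounds and extract convergent subsequences; (ii) pass to the limit in \eqref{eq:initendcond}--\eqref{eq:energy} by standard lower-semicontinuity and convexity arguments; and (iii) verify the relaxed compatibility \eqref{eq:newdefell} for the limit, which will be the main obstacle.

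For stage (i), the normalization \eqref{eq:normalization} together with \eqref{R3} yields the pointwise bounds $0\le\hat t_n'\le 1$ and $\|\hat z_n'\|\le 1/c$ almost everywhere, so $\hat t_n$ and $\hat z_n$ are uniformly Lipschitz, and the condition $\hat t_n(S_n)=T$ forces $S_n\ge T$. To bound $S_n$ from above, I would integrate the normalization from $0$ to $S_n$ and eliminate the dissipative integral via the energy identity \eqref{eq:energy}; the work term $\int_0^{S_n}\langle\hat\ell_n,\hat z_n'\rangle\,dr$ is controlled by a Stieltjes integration by parts using the hypothesis $\sup_n\|\hat\ell_n\|_{BV([0,S_n];\R^d)}<\infty$ together with the coercivity of $\EE$, yielding uniform bounds on $S_n$ and on $\|\hat z_n\|_{L^\infty}$. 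After constant extension past $S_n$, Arzelà--Ascoli delivers uniform (and hence weak-$*$) convergence of $\hat t_n,\hat z_n$ in $W^{1,\infty}$, while Helly's selection principle applied to $\hat\ell_n$ and $\ell_n$ provides weak-$*$ convergence in $\BV$ together with everywhere pointwise convergence to suitable representatives.

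For stage (ii), the initial and end time conditions \eqref{eq:initendcond} pass by uniform convergence. Since $D_z\hat\II_n(s,\hat z_n(s))=A\hat z_n(s)+\nabla\FF(\hat z_n(s))-\hat\ell_n(s)$ converges pointwise at every continuity point of $\hat\ell$ (a cocountable set) and $\dist(\cdot,\partial\RR(0))$ is continuous, the integrands in \eqref{eq:complementary}--\eqref{eq:normalization} converge pointwise almost everywhere; together with the weak-$*$ convergence of $\hat t_n'$ and $\hat z_n'$ and standard convexity arguments along the lines of \cite{MRS12}, this yields the equalities for the limit triple. The energy identity \eqref{eq:energy} follows by passing to the limit in the lower semicontinuous dissipation integral and by a Stieltjes integration-by-parts argument for the work term, exploiting dominated convergence and the pointwise convergence of $\hat\ell_n$.

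Stage (iii) is the main obstacle. To show $\hat\ell(s)=\ell(\hat t(s))$ for almost every $s\in M$, I would first restrict attention to those $s_0\in M$ whose image $\hat t(s_0)$ is a continuity point of $\ell$: since $\ell$ has only a countable set of jumps $J(\ell)$ and since $\hat t$ is strictly increasing on $M$, so that for every $t\in J(\ell)$ the preimage $\hat t^{-1}(\{t\})\cap M$ contains at most one point, the exceptional set $\{s_0\in M:\hat t(s_0)\in J(\ell)\}$ is at most countable and can be discarded. For such an $s_0$ we already have $\hat\ell_n(s_0)\to\hat\ell(s_0)$ through the chosen pointwise convergent representative, and the pointwise relation $\hat\ell_n(s)=\ell_n(\hat t_n(s))$ holds almost everywhere on $M_n$; a density argument shows that $s_0$ may be chosen to lie in $M_n$ for all large $n$. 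It thus remains to establish
\begin{equation*}
    \ell_n(\hat t_n(s_0))\to\ell(\hat t(s_0)).
\end{equation*}
This rests on the following auxiliary lemma, which I would state and prove as an appendix result: if $(f_n)\subset BV([0,T];\R^d)$ is uniformly bounded in $BV$-norm and converges pointwise everywhere to $f$, then $f_n(t_n)\to f(t)$ whenever $t_n\to t$ and $f$ is continuous at $t$. The lemma follows from the Jordan decomposition and Helly's theorem applied to the monotone parts. The hardest step of the overall argument is precisely this joint pointwise passage $\ell_n\circ\hat t_n\to\ell\circ\hat t$ at continuity points of $\ell$, coupled with the verification that the strict monotonicity sets $M_n$ approximate $M$ in a measure-theoretic sense.
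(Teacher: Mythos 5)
Your stages~(i) and~(ii) follow the same lines as the paper and are in order. The gap is in stage~(iii), and it is a genuine one: the auxiliary lemma you propose is false. Consider $f_n(s)=\mathbf 1_{[\frac{1}{2n},\frac{1}{n}]}(s)$ on $[0,1]$, which satisfies $\mathrm{Var}(f_n;[0,1])=2$ for all $n$ and $f_n\to f\equiv 0$ pointwise everywhere; $f$ is continuous at $t=0$, $t_n:=\tfrac{3}{4n}\to 0$, yet $f_n(t_n)=1$ for every $n$. The reason the proposed proof via Jordan decomposition breaks down is that the monotone parts of $f_n$ may have compensating jumps that collide in the limit: here both increasing parts converge to $\mathbf 1_{(0,1]}$, so the limit function is continuous at $t=0$ even though each monotone piece has a jump that approaches $0$. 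Since your reduction of $\hat\ell_n(s_0^n)=\ell_n(\hat t_n(s_0^n))$ to $\ell_n\circ\hat t_n\to\ell\circ\hat t$ at continuity points of $\ell$ hinges entirely on this lemma, stage~(iii) does not close.

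There is a second, smaller issue: the claim that ``a density argument shows that $s_0$ may be chosen to lie in $M_n$ for all large $n$'' is not justified. From uniform convergence $\hat t_n\to\hat t$ and $s_0\in M$ one can only conclude, as the paper does, that $\lambda(M_n\cap B_\epsilon(s_0))>0$ for all large $n$ — i.e.\ that $M_n$ has positive measure \emph{near} $s_0$, not that $s_0$ itself lies in $M_n$. (A shrinking flat piece of $\hat t_n$ straddling $s_0$ defeats the stronger claim.) One must therefore work with approximating points $s_0^n\in M_n\setminus N_n$, $s_0^n\to s_0$, which is precisely what throws you back onto the false auxiliary lemma.

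The paper sidesteps both difficulties with a quantitative counting argument rather than a pointwise one. Assuming by contradiction that $G=\{s\in M:\hat\ell(s)\neq\ell(\hat t(s))\}$ has positive measure, it extracts $\rho>0$ and $k$ well-separated points $s_1<\dots<s_k$ with $\|\hat\ell(s_i)-\ell(\hat t(s_i))\|\geq\rho$, picks nearby $s_i^n\in M_n\setminus N_n$ for $n$ large, and then uses the triangle inequality together with $\hat\ell_n(s_i^n)=\ell_n(\hat t_n(s_i^n))$ to bound $\sum_{i=1}^k\|\hat\ell_n(s_i)-\ell_n(\hat t(s_i))\|$ by $\mathrm{Var}(\ell_n;[0,T])+\mathrm{Var}(\hat\ell_n;[0,S])\leq C$, exploiting that the relevant intervals do not overlap. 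Passing to the limit only in the \emph{fixed-point} evaluations $\hat\ell_n(s_i)\to\hat\ell(s_i)$, $\ell_n(\hat t(s_i))\to\ell(\hat t(s_i))$ (for which pointwise convergence genuinely suffices) then yields $k\rho\leq C$, which is absurd for large $k$. The crucial difference is that no limit of the composition $\ell_n\circ\hat t_n$ is ever taken; the $BV$-bound is used \emph{before} the limit to control the sum uniformly in $k$. If you want to repair your stage~(iii), this is the argument to adopt.
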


\begin{proof}
	\textit{1. Convergence of a subsequence}\\
	We start with the boundedness of the sequence of relaxed solutions. First of all, \eqref{eq:normalization} gives  
	$\|\hat{z}'\|_{L^\infty(0,S_n;\R^d)}\leq\frac{1}{c}$ with $c>0$ from \eqref{R3}. 
	Next we prove that the artificial end time is uniformly bounded. Thanks to the boundedness of $\hat{z}_n'$ and $\hat{\ell}_n$
	(by assumption), the energy identity \eqref{eq:energy} yields
	\begin{align*}
		& \EE(\hat{z}_n(S_n))+\int_0^{S_n}\RR(\hat{z}_n'(r))+\|\hat{z}_n'(r)\|\abst(-D_z\hat{\II}_n(r,\hat{z}_n(r)),\partial\RR(0)) \, dr\\
		& \quad = \EE(z_0)+\int_0^{S_n}\langle \hat{\ell}_n(r),\hat{z}_n'(r)\rangle \, dr
		\leq \EE(z_0)+\|\hat{z}_n'\|_{L^\infty(0,S_n;\R^d)}\|\hat{\ell}_n\|_{L^1(0,S_n;\R^d)}\leq C.
	\end{align*}
	In combination with \eqref{eq:normalization}, this implies
	\begin{align*}
		S_n &=\int_0^{S_n}\hat{t}_n'(r)+\RR(\hat{z}_n'(r))+\|\hat{z}_n'(r)\|\abst(-D_z\hat{\II}_n(r,\hat{z}_n(r)),\partial\RR(0))\, dr\\
		&= T+ \int_0^{S_n}\RR(\hat{z}_n'(r))+\|\hat{z}_n'(r)\|\abst(-D_z\hat{\II}_n(r,\hat{z}_n(r)),\partial\RR(0))\, dr \leq C,
	\end{align*}
	where we made use of the non-negativity of $\EE(\hat{z}_n(S_n))$ by assumption~\eqref{eq:F1}. 
	Therefore, there is a subsequence (denoted by the same index) such that $S_n \to S$. 
	As in the statement of the theorem, we extend $\hat t_n$, $\hat z_n$, and $\hat\ell_n$ by constant continuation, 
	if necessary, i.e., if $S_n < S$. Due to $\|\hat{z}_n'\|_{L^\infty(0,S;\R^d)}\leq 1/c$ and $\hat{z}_n(0)=z_0$ for all $n \in \N$, 
	we have $\|\hat{z}_n\|_{W^{1,\infty}(0,S;\R^d)}\leq C$. Furthermore, due to \eqref{eq:initendcond}-\eqref{eq:normalization} 
	we conclude $\|\hat{t}_n\|_{W^{1,\infty}(0,S)}\leq C$. Thus, there exists a further subsequence (again denoted by the same symbol) 
	such that \eqref{eq:weakstarconv} holds. The weak$*$ and the pointwise convergence of the sequences of loads 
	along a further subsequence is an immediate consequence of the boundedness of the respective sequences by assumption.\\

	\textit{2. Correlation between $\hat{\ell}$ and $\ell$}\\
	Next we show that \eqref{eq:newdefell} holds true. We argue by contradiction and assume that the set 
    \begin{equation}\label{eq:defG}
        G:=\{s\in M:\hat{\ell}(s)\neq \ell(\hat{t}(s))\}\subset M
    \end{equation}    	
	has positive measure. This yields the existence of a $\rho >0$ such that 
	\begin{equation*}
		\widetilde{M}_\rho = M\cap\{s\in (0,S): \|\hat{\ell}(s)-\ell(\hat{t}(s))\|\geq\rho\}
	\end{equation*}
	has positive measure, too, since if not, then the sets $\{s\in M: \|\hat{\ell}(s)-\ell(\hat{t}(s))\|\geq \frac{1}{n}\}$ have measure zero for all $n\in \N$ 
	and thus $G= \bigcup_{n\in\N}\{s\in M: \|\hat{\ell}(s)-\ell(\hat{t}(s))\|\geq \frac{1}{n}\}$ as a countable union of null sets, too, 
	which contradicts the assumption.
	Since $\hat{t}_n,\hat{\ell}_n$ are part of the relaxed solution associated with $\ell_n$, we have $\hat{\ell}_n(s)=\ell_n(\hat{t}_n(s))$ 
	a.e.\ in $M_n :=\{s\in (0,S) : \hat{t}_n(s_1)<\hat{t}_n(s_2)~\mbox{for all }s_1<s<s_2\}$. We denote the corresponding null sets, 
	where this is not satisfied, by $N_n$. Then 
    \begin{equation*}
        M_\rho := \widetilde{M}_\rho \setminus\bigcup_{n\in\N}N_n
    \end{equation*}    	
    still has positive measure.
	
	Now let $k\in\N$, $k > 1$, be arbitrary. Since $M_\rho$ has positive measure, there exists points 
	$\tilde s_1<\tilde s_2<\ldots<\tilde s_{2k}\in M_\rho$ and, by construction of $M$, there holds 
    \begin{equation*}
        \hat t(\tilde s_{2 i}) < \hat t(\tilde s_{2 (i+1)})\quad \forall\, i = 1, ..., k-1.
    \end{equation*}    	
	Therefore, if we define $s_i := \tilde s_{2i}$, $i=1, ..., k$, then $s_1 < s_2 < \ldots < s_k \in M_\rho$ holds true as well as 
    $\hat{t}(s_1)<\hat{t}(s_2)<\ldots<\hat{t}(s_k)$.  Thus we obtain
	\begin{equation}
		\delta_k :=\min_{i=1,\dots,k-1}\frac{\hat{t}(s_{i+1})-\hat{t}(s_i)}{4}>0 \quad \text{and} \quad  
		\mu_k :=\min_{i=1,\dots,k-1}\frac{s_{i+1}-s_i}{4}>0. \label{eq:defdeltamu}
	\end{equation}
	
	Next we verify that for all $i=1, ..., k$
	\begin{equation}
	    \forall \, \epsilon>0~ \exists  \, \bar{n}_i(\epsilon) \in\N~ \forall \, n\geq \bar{n}_i(\epsilon):~ \lambda(M_n\cap B_\epsilon(s_i))>0, \label{eq:claim}
	\end{equation}
	which allows us to choose $s_i^n\in B_\epsilon(s_i)$ with $\hat{\ell}_n(s_i^n)=\ell_n(\hat{t}_n(s_i^n))$.
	Assuming that \eqref{eq:claim} is not valid implies the existence of an $\epsilon>0$ and 
	a subsequence $(n_j)_{j\in \N}$ such that $\lambda(M_{n_j}\cap B_\epsilon(s_i))=0$ for all $j\in\N$. Then, by the definition of 
	$M_{n_j}$, we obtain for all $s\in B_\epsilon(s_i)\setminus M_{n_j}$ an interval $I_s\ni s$ with $\hat{t}_{n_j}= \text{const.}$ on $I_s$. 
	As $\lambda(M_{n_j}\cap B_\epsilon(s_i))=0$, such an interval exists for almost all $s \in B_\epsilon(s_i)$ and thus, 
	as $\hat{t}_{n_j}$ is absolutely continuous, $\hat{t}_{n_j}$ is constant on $B_\epsilon(s_i)$.
	Along with the uniform convergence $\hat{t}_{n_j}\to\hat{t}$, this implies
	\begin{equation*}
		\hat{t}\Big(s_i-\frac{\epsilon}{2}\Big)
		= \lim_{j\to\infty} \hat{t}_{n_j}\Big(s_i-\frac{\epsilon}{2}\Big)
		=\lim_{j\to\infty} \hat{t}_{n_j}\Big(s_i+\frac{\epsilon}{2}\Big)
		=\hat{t}\Big(s_i+\frac{\epsilon}{2}\Big), 
	\end{equation*}  
    which contradicts $s_i\in M_\rho \subset M$ so that \eqref{eq:claim} is indeed true.
    
 	Therefore, if we now choose $\epsilon = \epsilon_k := \min\{\delta_k,\mu_k\}$ in \eqref{eq:claim}, 
	we obtain for all $n\geq \bar{n}_k =\max\{\bar{n}_i(\epsilon_k):i=1,\dots,k\}$, with $\bar{n}_i(\epsilon_k)$ from \eqref{eq:claim}, 
	and all $i=1,\dots,k$ a point $s_i^n\in (B_{\epsilon_k}(s_i)\cap M_n)\setminus N_n$, which implies 
    \begin{equation}\label{eq:sinMn}
        \hat{\ell}_n(s_i^n)=\ell_n(\hat{t}_n(s_i^n)) \quad \forall\, n \geq \bar n_k
    \end{equation}    	
	by the definition of $M_n$ and $N_n$. Note that $N_n$ has measure zero.
    Together with \eqref{eq:sinMn}, the triangle inequality gives
    \begin{equation}\label{eq:triangle}
        \sum_{i=1}^k \|\hat{\ell}_n(s_i)-\ell_n(\hat{t}(s_i))\|
        \leq \sum_{i=1}^k \|\ell_n(\hat{t}_n(s_i^n))-\ell_n(\hat{t}(s_i))\|+ \sum_{i=1}^k \|\hat{\ell}_n(s_i^n)-\hat{\ell}_n(s_i)\| 
    \end{equation}
    for all $n \geq \bar n_k$. 	
	
	Let us estimate the expressions on the right hand side of \eqref{eq:triangle}.
    We already know that the limit $\hat t$ satisfies $\hat t \in W^{1,\infty}(0,S)$ and hence it is Lipschitz continuous with constant $L>0$ on $[0,S]$. 
    Below we will see that $\hat t$ and $\hat z$ satisfy the normalization condition in \eqref{eq:normalization} 
    and thus, the Lipschitz constant of $\hat t$ is less or equal one.
	Moreover, due to the compact embedding $W^{1,\infty}(0,S)\hookrightarrow C([0,S])$, 
	$\hat{t}_n$ converges uniformly to $\hat{t}$ in $[0,S]$ and hence there exists an index $\tilde{n}_k \in\N$ such that
	$\sup_{s\in[0,S]}|\hat{t}_n(s)-\hat{t}(s)|<\delta_k$ for all $n\geq\tilde{n}_k$.
    Together with the Lipschitz continuity of $\hat t$ with constant one and $s_i^n\in B_{\epsilon_k}(s_i) \subset B_{\delta_k}(s_i)$ 
    for $n \geq \bar n_k$, this results in
	\begin{equation}
		|\hat{t}_n(s_i^n)-\hat{t}(s_i)|\leq |\hat{t}_n(s_i^n)-\hat{t}(s_i^n)|+|\hat{t}(s_i^n)-\hat{t}(s_i)|<2\delta_k. \label{eq:estimatettn}
	\end{equation}
	for all $n\geq n_k := \max\{\bar{n}_k,\tilde{n}_k\}$.
    In view of the definition of $\delta_k$ in \eqref{eq:defdeltamu}, this estimate implies that the intervals 
	$[\min\{\hat{t}_n(s_i^n),\hat{t}(s_i)\}, \max\{\hat{t}_n(s_i^n),\hat{t}(s_i)\}]$, $i=1,\dots,k$, do not overlap and 
	consequently 
	\begin{equation}\label{eq:varest1}
	    \sum_{i=1}^k \|\ell_n(\hat{t}_n(s_i^n))-\ell_n(\hat{t}(s_i))\|
	    \leq \mbox{Var}(\ell_n;[0,T]) \quad \forall \, n \geq n_k.
	\end{equation}
	Similarly, because of $s_i^n\in B_{\epsilon_k}(s_i) \subset B_{\mu_k}(s_i)$ and the definition of $\mu_k$ in \eqref{eq:defdeltamu}, 
	the intervals $[\min\{s_i^n,s_i\}, \max\{s_i^n,s_i\}]$, $i=1, ..., k$, do not overlap and therefore 
	\begin{equation}\label{eq:varest2}
    	\sum_{i=1}^k \|\hat{\ell}_n(s_i^n)-\hat{\ell}_n(s_i)\| \leq \mbox{Var}(\hat{\ell}_n;[0,S]) \quad \forall \, n \geq n_k.
	\end{equation}
	In view of the 	boundedness of $(\mbox{Var}(\ell_n; [0,T]))_n$ and $(\mbox{Var}(\hat\ell_n; [0,S]))_n$ by assumption,
	inserting \eqref{eq:varest1} and \eqref{eq:varest2} in \eqref{eq:triangle} yields 
    \begin{equation*}
        \sum_{i=1}^k \|\hat{\ell}_n(s_i)-\ell_n(\hat{t}(s_i))\| 
        \leq \mbox{Var}(\ell_n;[0,T])+\mbox{Var}(\hat{\ell}_n;[0,S]) \leq C \quad \forall\, n \geq n_k.
    \end{equation*}    	
    with $C>0$ independent of $n$ and $k$.
	Now using the pointwise convergences of $\hat{\ell}_n,\ell_n$ by \eqref{eq:ptconv} and passing to the limit $n\to \infty$ results in 
	\begin{equation*}
		\sum_{i=1}^k \|\hat{\ell}(s_i)-\ell(\hat{t}(s_i))\|\leq C.
	\end{equation*}
	On account of $s_i\in M_\rho$, however,  the left hand side is larger than $k\,\rho$ and, as  $k$ was arbitrary, 
	passing to the limit $k\to \infty$ finally leads to the desired contradiction.
	Hence, $G$ as defined in \eqref{eq:defG} has indeed measure zero and consequently \eqref{eq:newdefell} is valid.
	
	Proving the remaining conditions \eqref{eq:initendcond}--\eqref{eq:energy} for the limit $(S, \hat t, \hat z, \hat\ell)$ 
	is along the lines of the proof of stability for normalized, $\mathfrak{p}$-parametrized BV solutions for problems with smooth external loads, 
	see e.g.\ \cite[Sec.~5]{MRS12}. However, for convenience of the reader, we present the arguments in detail.\\ 
	
	\textit{3.  Initial and end time condtion, monotony of $\hat{t}$}\\
	Since the set $\{f\in L^{\infty}(0,S):f(s)\geq 0~ f.a.a.~s\in(0,S)\}$ is weakly* closed, 
	the limit fulfills $\hat{t}'(s)\geq0$ a.e.\ in $(0,S)$. 
	Moreover, as already used before, $W^{1,\infty}(0,S;\R^d)$ embeds compactly into $C(0,S;\R^d)$ and therefore, 
	$(\hat{t}_n)_{n\in\N}$ and $(\hat{z}_n)_{n\in\N}$ converge uniformly. Therefor, $\hat t(0) = 0$ as well as 
	$\hat{t}(S)=\lim_{n\to\infty}\hat{t}_n(S_n)=T$ and $\hat{z}(0)=\lim_{n\in\N}\hat{z}_n(0)=z_0$ hold true so that 
	\eqref{eq:initendcond} is fulfilled.\\
		
	\textit{4. Energy equality}\\
	Let $s_1, s_2 \in [0,S]$ be given.
	Due to the pointwise convergence of $\hat{z}_n\to\hat{z}$ and the continuity of $\EE$, 
	we have $\EE(\hat{z}_n(s_i))\to \EE(\hat{z}(s_i))$, $i=1,2$.
    Along with the the lower semi-continuity of $\mathfrak{p}(\cdot,\cdot)$ according to \cite[Lemma 3.1]{MRS09}, it follows that
	\begin{equation}
	\begin{aligned}\label{eq:proofenergy}
		&\EE(\hat{z}(s_2))+\int_{s_1}^{s_2}\RR(\hat{z}'(r))+\|\hat{z}'(s)\|\abst(-D_z\hat{\II}(r,\hat{z}(r)),\partial\RR(0))\, dr\\
		& \quad =\EE(\hat{z}(s_2))+\int_{s_1}^{s_2} \mathfrak{p}(\hat{z}'(r),-D_z\hat{\II}(r,\hat{z}(r))) \,dr \\
		&\quad \leq \liminf_{n\to\infty} \bigg(\EE(\hat{z}_n(s_2))+\int_{s_1}^{s_2} \mathfrak{p}(\hat{z}_n'(r),-D_z\hat{\II}_n(r,\hat{z}_n(r)))\, dr\bigg)\\
		&\quad \leq \limsup_{n\to\infty} \bigg(\EE(\hat{z}_n(s_2))+\int_{s_1}^{s_2} \mathfrak{p}(\hat{z}_n'(r),-D_z\hat{\II}_n(r,\hat{z}_n(r)))\, dr\bigg)\\
		& \quad \leq \limsup_{n\to\infty} \bigg(\EE(\hat{z}_n(s_1))+\int_{s_1}^{s_2}\langle \hat{\ell}_n(r),\hat{z}_n'(r)\rangle \, dr\bigg)\\
		&\quad = \EE(\hat{z}(s_1))+\int_{s_1}^{s_2}\langle \hat{\ell}(r),\hat{z}'(r)\rangle\,  dr,
	\end{aligned} 
	\end{equation}
	where we exploited the strong convergence $\hat{\ell}_n\to\ell$ in $L^2(0,S;\R^d)$ by the compactness of 
	$\BV(0,S;\R^d) \embed L^2(0,S;\R^d)$ 
	and the weak* convergence $\hat{z}_n'\weakly^* \hat{z}'$ in $L^\infty(0,S;\R^d)$ in the last step.
	Because the opposite inequality is always fulfilled by Lemma~\ref{lem:ygrene}, we end up with equality and since $s_1, s_2\in[0,S]$ 
	were arbitrary, \eqref{eq:energy} is shown. \\
	
	\textit{5. Complementary condition}\\
	The convergences $\hat{\ell}_n\to\hat{\ell}$ in $L^2(0,S;\R^d)$ and $\hat{z}_n\rightharpoonup^*\hat{z}$ in 
	$W^{1,\infty}(0,S;\R^d)$ lead to $D_z\hat{\II}_n(\cdot,\hat{z}_n(\cdot))\to D_z\hat{\II}(\cdot,\hat{z}(\cdot))$ in $L^2(0,S;\R^d)$ and, 
	by Lipschitz-continuity of the distance, we obtain
	\begin{align*}
		\abst\big(-D_z\hat{\II}_n(\cdot,\hat{z}_n(\cdot)),\partial\RR(0)\big)\to \abst\big(-D_z\hat{\II}(\cdot,\hat{z}(\cdot)),\partial\RR(0)\big) \mbox{ in } L^2(0,S).
	\end{align*}
    Together with the weak* convergence $\hat{t}_n'\rightharpoonup^*\hat{t}'$ in $L^\infty(0,S)$, this results in
	\begin{align*}
		 & \int_0^S \hat{t}'(r)\abst(-D_z\hat{\II}(r,\hat{z}(r)),\partial\RR(0)) dr \\
		 & \quad =\lim_{n\to\infty} \int_0^S \hat{t}_n'(r)\abst(-D_z\hat{\II}_n(r,\hat{z}_n(r)),\partial\RR(0)) dr =0
	\end{align*}
	such that $\hat{t}'(s)\abst(-D_z\hat{\II}(s,\hat{z}(s)),\partial\RR(0))=0$ for almost all $s\in(0,S)$ 
	due to the non-negativity of the integrand.\\
	
	\textit{6. Normalization}\\
	The pointwise convergence $\EE(\hat{z}_n(\cdot))\to\EE(\hat{z}(\cdot))$ in combination with the fact that all inequalities 
	in \eqref{eq:proofenergy} hold true with equality yields for all $s_1, s_2\in[0,S]$ that 
	\begin{multline*}
		s_2 - s_1 = \int_{s_1}^{s_2} \hat{t}_n'(r)+ \mathfrak{p}(\hat{z}_n'(r),-D_z\hat{\II}_n(r,\hat{z}_n(r)))\, dr\\
	 	\to \int_{s_1}^{s_2} \hat{t}'(r)+ \mathfrak{p}(\hat{z}'(r),-D_z\hat{\II}(r,\hat{z}(r))) \, dr \quad \text{as } n \to \infty. 
	\end{multline*}
	Thus, assuming the existence of a Lebesgue measurable set $E\subset (0,S)$ 
	with $\hat{t}'(s)+ \mathfrak{p}(\hat{z}'(s),-D_z\hat{\II}(s,\hat{z}(s)))>1$  for almost all $s\in E$ and 
	$\lambda(E)>0$ gives for every finite union $U\subset[0,S]$ of pairwise disjoint intervals with $E\subset U$
	\begin{align*}
		\lambda(U)=\int_U \hat{t}'(r)+ \mathfrak{p}(\hat{z}'(r),-D_z\hat{\II}(r,\hat{z}(r)))\, dr
		& \geq \int_E \hat{t}'(r)+ \mathfrak{p}(\hat{z}'(r),-D_z\hat{\II}(r,\hat{z}(r)))\, dr \\
		& \geq \lambda(E)+\varepsilon
	\end{align*} 
	for an $\varepsilon>0$, only depending on $E$ and not on $U$, which contradicts the regularity of the Lebesgue measure. 
	Similarly $\hat{t}'(s)+ \mathfrak{p}(\hat{z}'(s),-D_z\hat{\II}(s,\hat{z}(s)))<1$ for almost all $s\in E$ with $\lambda(E)>0$ would imply 
	\begin{align*}
		S&=\int_0^S \hat{t}'(r)+ \mathfrak{p}(\hat{z}'(r),-D_z\hat{\II}(r,\hat{z}(r)))\, dr \\
		 &=\int_E\hat{t}'(r)+ \mathfrak{p}(\hat{z}'(r),-D_z\hat{\II}(r,\hat{z}(r)))\, dr
		 +\int_{(0,S)\setminus E} \hat{t}'(r)+ \mathfrak{p}(\hat{z}'(r),-D_z\hat{\II}(r,\hat{z}(r)))\,dr\\
		 &<\lambda(E)+\lambda((0,S)\setminus E) = S.
	\end{align*}
	Hence, we have $\hat{t}'(\cdot)+ \mathfrak{p}(\hat{z}'(\cdot),-D_z\hat{\II}(\cdot,\hat{z}(\cdot)))=1$ a.e.\ $(0,S)$, which is \eqref{eq:normalization}.
\end{proof}

A stability result of the form of Theorem~\ref{thm:stability} is of course useful in many regards. 
As a potential application, we consider the following optimal control problem
	\begin{equation}
	\left\{\quad 
		\begin{aligned}\label{eq:optcontrol}
			\min \quad &  J(S,\hat{z},\ell):=j(\hat{z}(S))+\beta \, \|\ell\|_{BV([0,T];\R^d)}\\
		    \text{s.t.} \quad & \ell\in BV([0,T];\R^d),\quad(S,\hat{t},\hat{z},\hat{\ell})\in\LL(\ell),\quad \|\hat{\ell}\|_{BV([0,S];\R^d)}\leq K,
		\end{aligned}
		\right.
	\end{equation}
where 
\begin{align*}
	\LL(\ell) := 
	\big\{ (S,\hat{t},\hat{z},\hat{\ell})\in [T,\infty) \times W^{1,\infty}(0,S) \times W^{1,\infty}(0,S;\R^d)\times BV([0,S];\R^d): \quad &\\
	(S,\hat{t},\hat{z},\hat{\ell}) \text{ is a relaxed solution associated with $\ell$} & \big\}.
\end{align*}
Furthermore, $\beta>0$ and $K > T\|\ell_0\|$ a given constants. 
The function $j:\R^d\to\R$ in the objective is supposed to be continuous and bounded from below.
By means of Theorem~\ref{thm:stability} we can now prove the following result.

\begin{corollary}
	There exists at least one  globally optimal solution
	\begin{multline*}
	 	(S^*,\hat{t}^*,\hat{z}^*,\hat{\ell}^*,\ell^*)\\
	 	\in [T,\infty)\times W^{1,\infty}(0,S^*) \times W^{1,\infty}(0,S^*;\R^d)\times BV([0,S^*];\R^d)\times BV([0,T];\R^d)
		\end{multline*}
		of \eqref{eq:optcontrol}.
\end{corollary}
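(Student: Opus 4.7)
I would proceed by the direct method of the calculus of variations, with Theorem~\ref{thm:stability} delivering precisely the compactness and closedness needed to pass to the limit along a minimizing sequence.

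I start by verifying that the problem is feasible and well posed. The standing assumption $-D_z\EE(z_0)+\ell_0\in\partial\RR(0)$ implies that the constant load $\ell\equiv\ell_0$ on $[0,T]$ is admissible: the stationary quadruple $S=T$, $\hat t(s)=s$, $\hat z(s)\equiv z_0$, $\hat\ell(s)\equiv\ell_0$ satisfies all conditions \eqref{eq:initendcond}--\eqref{eq:energy} of Definition~\ref{def:rexsol} (the normalization reduces to $1+0+0=1$, the energy balance to $\EE(z_0)=\EE(z_0)$ since $\hat z'\equiv 0$, and \eqref{eq:newdefell} is trivial as $M=(0,T)$), and $\|\hat\ell\|_{BV([0,T];\R^d)}=T\|\ell_0\|<K$ by hypothesis on $K$. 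Since $j$ is bounded from below and the penalty term is nonnegative, $m:=\inf J$ is finite. Pick a minimizing sequence $(S_n,\hat t_n,\hat z_n,\hat\ell_n,\ell_n)$. From $J(S_n,\hat z_n,\ell_n)\to m$ and the lower bound on $j$ one obtains a uniform bound on $\|\ell_n\|_{BV([0,T];\R^d)}$, and the explicit constraint gives $\|\hat\ell_n\|_{BV([0,S_n];\R^d)}\leq K$, so all hypotheses of Theorem~\ref{thm:stability} are in force.

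I then extract a subsequence via Theorem~\ref{thm:stability}: after the constant-extension convention, there is $(S^*,\hat t^*,\hat z^*,\hat\ell^*)\in\LL(\ell^*)$ with $S_n\to S^*$, $\hat t_n\weak^*\hat t^*$ and $\hat z_n\weak^*\hat z^*$ in $W^{1,\infty}$, and $\hat\ell_n,\ell_n$ converging both pointwise and weakly$*$ in the respective $\BV$ spaces to $\hat\ell^*,\ell^*$. The bound $S_n\geq T$, already derived inside step~1 of that proof from the identity
\begin{equation*}
    S_n=T+\int_0^{S_n}\bigl[\RR(\hat z_n'(r))+\|\hat z_n'(r)\|\,\abst(-D_z\hat\II_n(r,\hat z_n(r)),\partial\RR(0))\bigr]\,dr,
\end{equation*}
passes to the limit, so $S^*\in[T,\infty)$. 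The constraint $\|\hat\ell^*\|_{BV([0,S^*];\R^d)}\leq K$ follows from pointwise convergence $\hat\ell_n\to\hat\ell^*$ on $[0,S^*]$ together with lower semicontinuity of total variation under pointwise convergence and dominated convergence for the $L^1$ part; the required uniform $L^\infty$ bound comes from $\|\hat\ell_n\|_\infty\leq\|\hat\ell_n(0)\|+\mbox{Var}(\hat\ell_n;[0,S_n])$ combined with $\|\hat\ell_n(0)\|\leq K/T+K$, which in turn follows from $\|\hat\ell_n\|_{L^1}\leq K$ and $S_n\geq T$.

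It remains to verify sequential lower semicontinuity of $J$. For the terminal cost the bound $\|\hat z_n'\|_{L^\infty}\leq 1/c$ with $c$ from \eqref{R3}, combined with the uniform convergence $\hat z_n\to\hat z^*$ on $[0,S^*]$ (compact embedding $W^{1,\infty}\embed C$) and $S_n\to S^*$, yields $\hat z_n(S_n)\to\hat z^*(S^*)$ via
\begin{equation*}
    \|\hat z_n(S_n)-\hat z^*(S^*)\|\leq \tfrac{1}{c}\,|S_n-S^*|+\sup_{s\in[0,S^*]}\|\hat z_n(s)-\hat z^*(s)\|\to 0,
\end{equation*}
and then $j(\hat z_n(S_n))\to j(\hat z^*(S^*))$ by continuity of $j$. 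For the penalty term the same lsc argument for the BV norm applied to $\ell_n\to\ell^*$ on $[0,T]$ gives $\|\ell^*\|_{BV([0,T];\R^d)}\leq\liminf\|\ell_n\|_{BV([0,T];\R^d)}$. Hence $J(S^*,\hat z^*,\ell^*)\leq\liminf J(S_n,\hat z_n,\ell_n)=m$, and the limit is optimal. The main technical point is not a hard estimate but rather the careful bookkeeping around the moving endpoint $S_n$, the constant extension, and the several modes of convergence; essentially all of this labor has already been done inside Theorem~\ref{thm:stability}, and the corollary is a quite direct application of it.
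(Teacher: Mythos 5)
Your proof is correct and takes essentially the same route as the paper's: feasibility via the stationary solution generated by $\ell\equiv\ell_0$, a minimizing sequence with the two boundedness conditions met, extraction via Theorem~\ref{thm:stability}, and lower semicontinuity of the cost. The extra bookkeeping you supply (the explicit $S^*\ge T$ check, the uniform $L^\infty$ bound on $\hat\ell_n$, the triangle-inequality estimate for the moving endpoint $\hat z_n(S_n)\to\hat z^*(S^*)$) consists of details the paper leaves implicit but does not change the argument; note that for the $L^1$ part of the $BV$-norm one may also simply invoke the strong $L^1$ convergence built into the definition of weak$*$ convergence in $\BV$, which is what the paper implicitly relies on instead of your dominated-convergence step.
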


\begin{proof}
    Based on Theorem~\ref{thm:stability}, the proof follows standard arguments.
	Due to the assumption $-Az_0-\FF(z_0)+\ell_0\in\partial\RR(0)$, the constant functions $\hat{z}\equiv z_0$ and $\hat{\ell}\equiv \ell_0$ 
	together with $\hat{t}=\text{id}$ and $S=T$ generate a normalized, $\mathfrak{p}$-parametrized BV solution associated with $\ell \equiv \ell_0$
	in the sense of Definition \ref{def:paramsol}. Thus it is also a relaxed solution associated with $\ell\equiv \ell_0$.
	Moreover, $\hat{\ell}$ trivially satisfies the boundedness condition in the constraints. 
	Therefore, the feasible set of \eqref{eq:optcontrol} is non-empty and we obtain an infimal sequence 
	$(S_n,\hat{t}_n,\hat{z}_n,\hat{\ell}_n,\ell_n)$ such that $J(S_n,\hat{z}_n,\ell_n)\to J^*$, where 
    \begin{equation*}
        J^* := \inf\Big\{J(S,\hat{z},\ell): \ell\in BV([0,T];\R^d),~(S,\hat{t},\hat{z},\hat{\ell})\in \LL(\ell),~\|\hat{\ell}\|_{BV([0,S];\R^d)}\leq K\Big\}.
    \end{equation*}    	
	Since $j$ is bounded from below, the sequence $(\ell_n)_n$ is bounded in $BV([0,T];\R^d)$. 
	Moreover, according the constraints, the sequence $(\|\hat{\ell}_n\|_{BV([0,S_n];\R^d)})_n$ is also bounded.  
    Therefore Theorem~\ref{thm:stability} gives the existence of a subsequence converging to a limit $(S^*,\hat{t}^*,\hat{z}^*,\hat{\ell}^*,\ell^*)$ 
    such that $(S^*,\hat{t}^*,\hat{z}^*,\hat{\ell}^*)\in\LL(\ell^*)$ in the sense of \eqref{eq:weakstarconv}--\eqref{eq:ptconv}. 
    Now, given an arbitrary partition $\{t_k\}$ of $[0,T]$, the pointwise convergence of $\ell_n$ from \eqref{eq:ptconv} implies
    \begin{equation*}
        \sum_{k} \|\ell(t_k) - \ell(t_{k-1})\| 
        = \lim_{n\to\infty}\sum_k  \|\ell_n(t_k) - \ell_n(t_{k-1})\| \leq \liminf_{n\to\infty} \mbox{Var}(\ell_n;[0,T]).
    \end{equation*}
    Since $\{t_k\}$ was arbitrary, the total variation is thus lower semicontinuous w.r.t.\ pointwise convergence 
    and so is the norm in $BV([0,T];\R^d)$ and $BV([0,S];\R^d)$, respectively.
    Thus the bound on the $BV([0,S];\R^d)$-norm readily carries over to the limit $\hat\ell^*$ giving its feasibility. 
    Finally, the compact 	embedding $W^{1,\infty}(0,S^*)\hookrightarrow C([0,S^*])$ yields $\hat{z}_n\to \hat{z}^*$ 
    uniformly so that exploiting the continuity of $j$ and the lower 	semi-continuity of the $BV([0,T];\R^d)$-norm results in 
    $J(S^*,\hat{z}^*,\ell^*)\leq \liminf_{n\to\infty} J(S_n,\hat{z}_n,\ell_n)=J^*$, which means $(S^*,\hat{t}^*,\hat{z}^*,\hat{\ell}^*,\ell^*)$ is an 
	optimal solution of \eqref{eq:optcontrol}.
\end{proof}


\section{Physical plausibility of the relaxed solution concept}\label{sec:plausibility}

\subsection{Relation to local solutions}
In order to classify our relaxed solution concept, we compare it with the concept of a \emph{local solution}, which is 
one of the weakest solution concepts imposing less conditions on a solution compared to the other concepts.
In our finite dimensional setting with loads in $BV$, this solution concept reads as follows:

\begin{definition}\label{def:locsol}
	Let $\ell\in BV([0,T];\R^d)$ be given. We call $z\in BV([0,T];\R^d)$ a \emph{local solution} of \eqref{eq:ris} associated with $\ell$, if
	\begin{align}
		& 0\in\partial\RR(0)+D_z\II(t,z(t)) \quad \text{f.a.a.\ }t\in[0,T]\label{eq:locstab}\\
		& \begin{aligned}[t]
		    & \II(t_2,z(t_2))+\mbox{Diss}_\RR(z;[t_1,t_2]) \\[-0.5ex]
		    & \qquad\qquad \leq \II(t_1,z(t_1))- \int_{t_1}^{t_2} z(r) \, d\ell(r) \quad\forall~0\leq t_1\leq t_2\leq T, 		
		\end{aligned} \label{eq:energyineqlocsol}
	\end{align}
	where 
    \begin{equation}\label{eq:defDissR}
    \begin{aligned}
        & \mbox{Diss}_\RR(z;[t_1,t_2]) := \\
        & \quad \sup\Big\{\sum_{i=1}^{k}\RR(z(\xi_i)-z(\xi_{i-1}))~\big|~t_1=\xi_0<\xi_1<\dots<\xi_n=t_2,~n\in\N\Big\}
    \end{aligned}
    \end{equation}    	
	 and the integral on the left hand side is to be understood as a Kurzweil integral.
\end{definition}

\begin{remark}
    For external loads $\ell\in H^1(0,T;\R^d)$, the Kurzweil integral in \eqref{eq:energyineqlocsol} 
    can be converted into the Lebesgue integral $\int_{t_1}^{t_2} \langle z(r),\ell'(r)\rangle dr$, 
	cf. \cite[Prop. 1.10]{Krejci2009}, so that the above definition coincides with \cite[Def. 3.3.2]{MRRIS}.
\end{remark}	 

We underline that the subadditivity of $\RR$ implies that
\begin{equation}\label{eq:dissRadd}
   \mbox{Diss}_\RR(z;[t_1,t_2]) = \mbox{Diss}_\RR(z;[0,t_2]) - \mbox{Diss}_\RR(z;[0,t_1]),
\end{equation}
for all $0 \leq t_1 \leq t_2 \leq T$ provided that $z$ is continuous in $t_1$.
In order to compare our solution concept with the concept of local solutions, 
we have to translate a relaxed solution into physical time. For this purpose, assume that we are given a relaxed solution 
$(S,\hat{t},\hat{z},\hat{\ell})$. Then we define the set of projections of $(\hat{t},\hat{z})$ as
\begin{equation}
\begin{aligned}
	& \mathfrak{P}(\hat{t},\hat{z}) :=\\
	& \quad \{z:[0,T]\to\R^d : \forall\, t\in[0,T]~\exists\, s\in[0,S] \mbox{ with } (t,z(t))=(\hat{t}(s),\hat{z}(s))\}.\label{eq:defP}
\end{aligned}
\end{equation}
Note that $\mathfrak{P}$ consists of all functions $z:[0,T]\to \R^d$, whose graph is subset of the solution trajectory
$[0,S] \ni s \mapsto  (\hat{t}(s),\hat{z}(s))$.
The following theorem correlates local with relaxed solutions.

\begin{theorem}\label{thm:locstab}
	Let $\ell\in BV([0,T];\R^d)$ and local solution $z\in BV([0,T];\R^d)$ be given. Furthermore, we assume that $\RR$ is symmetric, i.e., 
    \begin{equation}\label{eq:symm}
        \RR(z_1)=\RR(z_2)\quad \Longleftrightarrow \quad \|z_1\|=\|z_2\| \quad \forall\, z_1,z_2\in\R^d,
    \end{equation}    	
     and $z$ (and its representative fulfilling \eqref{eq:locstab}--\eqref{eq:energyineqlocsol}, respectively) only admits a finite number of jumps.
	Then there exists a parametrization and $\hat{\ell}\in BV([0,S];\R^d)$ such that 
	$(S,\hat{t},\hat{z},\hat{\ell})$ is a relaxed solution and $z\in\mathfrak{P}(\hat{t},\hat{z})$.
\end{theorem}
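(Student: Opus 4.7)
The plan is a ``graph-completion'' construction: parametrize the graph of $z$ in $[0,T]\times\R^d$ by physical time plus accumulated dissipation on the continuous parts, and splice in viscous transitions (on which $\hat t$ is constant) at each of the finitely many jumps. Let $t_1<\cdots<t_K$ index the jumps of $z$ and set $t_0:=0$, $t_{K+1}:=T$, so that $z$ is continuous on each $[t_{k-1},t_k]$. A preliminary step is to upgrade the local-solution energy inequality to an equality on every continuous subinterval: local stability yields $-D_z\II(t,z(t))\in\partial\RR(0)$ a.e., hence by $1$-homogeneity of $\RR$ one obtains $\langle -D_z\II(t,z(t)),\dot z(t)\rangle=\RR(\dot z(t))$ a.e., which together with the chain rule for $\EE\circ z$ forces equality in the local-solution energy inequality on $(t_{k-1},t_k)$. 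On such an interval I would then define the non-decreasing function $\sigma_k(t):=(t-t_{k-1})+\mbox{Diss}_\RR(z;[t_{k-1},t])$, shift it by the cumulative $s$-length coming from earlier pieces, and use its generalized inverse as $\hat t$; set $\hat z(s):=z(\hat t(s))$ and $\hat\ell(s):=\ell(\hat t(s))$. Since $\hat t$ is strictly increasing on the corresponding $s$-interval, that interval lies in $M$, so the relaxed compatibility condition holds trivially there; local stability forces $\dist(-D_z\hat\II,\partial\RR(0))=0$, which yields the complementary relation and reduces the normalization condition to $\hat t'+\RR(\hat z')=1$, true by construction of $\sigma_k$.

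At each jump $t_k$ I would splice in a viscous interval $[s_k^-,s_k^+]$ on which $\hat t\equiv t_k$ and $\hat z$ is a constant-speed piecewise-linear path $z(t_k-)\to z(t_k)\to z(t_k+)$. By the symmetry assumption on $\RR$, the $\RR$-length of this path equals the jump contribution $\mbox{Diss}_\RR(z;\{t_k\})$, so no dissipation is ``wasted'' by detouring through $z(t_k)$. I would fix the length $s_k^+-s_k^-$ so that the normalization condition can be enforced along the segment; since $[s_k^-,s_k^+]\cap M=\emptyset$, the relaxed compatibility condition imposes no restriction on $\hat\ell$ here. I would then choose $\hat\ell$ piecewise constant on the viscous interval so as to close the energy identity exactly across the jump. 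Existence of such a value reduces to solving a scalar linear equation, whose right-hand side is the jump deficit of the local-solution energy inequality across $\{t_k\}$ and whose coefficient involves $z(t_k+)-z(t_k-)$ (and the analogous differences on the two sub-segments); this is non-degenerate whenever $z$ actually jumps at $t_k$, and the degenerate case corresponds to no jump at all and requires no construction. Concatenating all pieces yields a triple $(\hat t,\hat z,\hat\ell)$ on $[0,S]$ with the required regularity, and $z\in\mathfrak{P}(\hat t,\hat z)$ follows because every value $z(t)$, including at the jump times, lies on the traced graph by construction.

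The hard part is closing the energy identity on each viscous transition. This rests on two ingredients: the upgrade of the energy inequality to an equality on continuous parts (standard, as sketched above) and a careful selection of $\hat\ell$ on the viscous intervals, made possible precisely by the relaxation from the original compatibility condition of Definition~\ref{def:paramsol} to the weaker condition of Definition~\ref{def:rexsol}. The symmetry assumption on $\RR$ plays a critical role here: without it one could not guarantee that the $\RR$-dissipation along the piecewise-linear interpolation through $z(t_k)$ matches the intrinsic jump contribution $\mbox{Diss}_\RR(z;\{t_k\})$, so that the dissipation bookkeeping on the viscous transition could fail regardless of the choice of $\hat\ell$.
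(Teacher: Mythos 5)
Your overall strategy — reparametrize by physical time plus accumulated dissipation on the continuous parts, splice in viscous segments at the finitely many jumps, and exploit the relaxed compatibility condition to choose $\hat\ell$ freely on those segments — is the same as in the paper. However, the key step of your construction, the choice of $\hat\ell$ in the jump intervals, contains a genuine error that makes the argument fail.

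You propose to take $\hat\ell$ \emph{piecewise constant} on each viscous interval, asserting that closing the energy identity ``across the jump'' reduces to one scalar linear equation per jump. This misreads the requirement \eqref{eq:energy}: the energy identity must hold for \emph{every} pair $s_1,s_2\in[0,S]$, not merely for $s_1=s_k^-$, $s_2=s_k^+$. Differentiating in $s$, this is equivalent to a pointwise condition holding a.e.\ in the jump, and combined with the normalization condition \eqref{eq:normalization} (which likewise must hold a.e.\ in the jump) one is forced to the identity $-D_z\hat\II(s,\hat z(s))=\hat z'(s)/\|\hat z'(s)\|^2$, i.e.,
\begin{equation*}
\hat\ell(s)=A\hat z(s)+D_z\FF(\hat z(s))+\frac{\hat z'(s)}{\|\hat z'(s)\|^2}
\end{equation*}
a.e.\ on the viscous segments. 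Since $A$ is positive definite and $\hat z$ is non-constant there, this $\hat\ell$ is necessarily non-constant; a constant choice breaks both \eqref{eq:normalization} and \eqref{eq:energy} on sub-intervals of the jump. So your ``scalar linear equation'' is in fact a continuum of equations, and the claimed piecewise-constant $\hat\ell$ does not exist.

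Two smaller points. First, the role you assign to the symmetry of $\RR$ is misplaced: the equality of the $\RR$-length of the spliced path with the jump contribution $\RR(z(t_k)-z(t_k-))+\RR(z(t_k+)-z(t_k))$ holds with no symmetry assumption at all. Symmetry is needed for a different reason, namely so that the constructed $\hat\ell$ satisfies the normalization condition: using the conjugate formula of Lemma~\ref{lem:Rstar}, one must show $\|\hat z'\|\,\abst(\hat z'/\|\hat z'\|^2,\partial\RR(0))=1-\RR(\hat z')=0$, and this is exactly where symmetry enters (see the remark after the theorem, which exhibits a non-symmetric $\RR$ for which \eqref{eq:normalization} fails). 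Second, your ``upgrade to equality'' of the local-solution energy inequality on continuous intervals cites local stability plus $1$-homogeneity to conclude $\langle -D_z\II,\dot z\rangle=\RR(\dot z)$; local stability only gives ``$\leq$'', and the reverse inequality must come from the energy inequality \eqref{eq:energyineqlocsol} itself (and needs an argument when $z$ is merely continuous $BV$ and not absolutely continuous). The paper avoids this by reparametrizing first, so that $\hat z$ is Lipschitz, and then invoking Lemma~\ref{lem:ygrene} for the reverse inequality.
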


\begin{proof}
\textit{1. Construction and regularity of $\hat{t}$ and $\hat{z}$}\\
Throughout the proof, we always consider the representative of $z$ satisfying \eqref{eq:locstab}--\eqref{eq:energyineqlocsol}
and denote it by the same symbol for convenience.
We first need to 
construct a suitable parametrization $s\mapsto (\hat t(s), \hat z(s))$ of the solution trajectory as a candidate for a relaxed solution.
In the literature on $\mathfrak{p}$-parametrized BV solutions, given a solution in physical time, 
the parametrized solution $\hat{z}$ is frequently designed in jumps in such a way that the dissipative distance is minimized. 
More precisely, if $t \in [0,T]$ is a jump point between the values $z_1$ and $z_2$, 
then $\hat{z}$ is constructed by means of the minimizer of 
\begin{equation}\label{eq:defdissdist}
    \left\{\quad
    \begin{aligned}
    	\min \quad &\int_0^1\mathfrak{p}(v'(r),-D_z\II(t,v(r)))dr\\
        \text{s.t.} \quad & v\in W^{1,1}(0,1;\R^d), \quad v(0)=z_1,\quad v(1)=z_2,
    \end{aligned}
    \right.
\end{equation}
cf.\ \cite[Sec. 3.8.2]{MRRIS}. 
Here, we pursue a different strategy. Since, in the relaxed solution concept, $\hat\ell$ is an additional variable in jumps, 
we have certain freedom in the choice of $\hat{z}$. This allows us to use a simpler construction of $\hat z$ and the associated parametrization.
For this purpose, let us define the following modified dissipation 
\begin{equation}\label{eq:Diss0}
    \Diss_{0}(z;[0, t])
    := \RR(z(0+) - z_0) + \Diss_{\RR}(z;[0+,t])
\end{equation}
taking into account potential jumps at the initial time. Note that the local solution need not satisfy 
the initial condition. Given $\Diss_0$, we set  
\begin{equation}\label{eq:defs}
 	s:[0,T]\to[0,S], \quad s(t):=
    \begin{cases}
        0, & t = 0,\\
     	t+\Diss_0(z;[0,t]), & t \in (0,T] ,\\
    \end{cases}     	
\end{equation} 
where $S:=s(T)$. Note that, by \eqref{R3}, 
\begin{equation*}
    \Diss_\RR(z;[0,T]) \leq C \, \mbox{Var}(z;[0,T]) < \infty,
\end{equation*}
so that $S$ is finite. Let us investigate the regularity of $s$. We denote the jump points of $z$ that are greater zero by 
$0 < t_1 < t_2 < \ldots < t_N \leq T$ and set $J(z) := \{t_1,\dots,t_N\}$. 
According to its definition, $\Diss_\RR(z;[0+,\cdot\,])$ is continuous in intervals of continuity of $z$, i.e., 
$(t_n,t_{n+1})$ with $t_n,t_{n+1}\in J(z)$, $n = 1, \ldots, N$, as it inherits its continuity from $z$ there. 
Thus, by construction, the jump points of $s$ are $J(s) := \{0, t_1,\dots,t_N\}$, if $z(0+) \neq z_0$, and $J(s) = J(z)$ otherwise. 
In the remaining intervals however, i.e., in $(0,t_1)$, $(t_1, t_2)$, ..., $(t_{N-1}, t_N)$, $(t_N, T)$, the function $s$ is continuous. 
Moreover, the non-negativity of $\RR$ implies that $s$ is strictly increasing and hence invertible. 
We denote its inverse function as 
\begin{equation*}
    \hat{t}:[0,S]\setminus \bigcup_{n=0}^{N}I_n\to [0,T]\setminus J(s)
\end{equation*}
with 
\begin{equation}\label{eq:defIn}
    I_0 := [0, \RR(z(0+) - z_0)] 
    \quad \text{and}  \quad I_n:=[s(t_n-),s(t_n+)].
\end{equation}
Then $\hat{t}$ is monotonously increasing as an inverse function of an increasing function. Furthermore, $\hat t$ is Lipschitz-continuous  
with Lipschitz constant $L\leq 1$ on all intervals of $[0,S]\setminus \bigcup_{n=0}^{N}I_n$, 
since \eqref{eq:defs} and \eqref{eq:dissRadd} give for $s_1<s_2$ from such an interval that
\begin{equation}\label{eq:hattLip}
\begin{aligned}
 	0 \leq \hat{t}(s_2)-\hat{t}(s_1) &= s_2 - s_1 - \big( \Diss_0(z;[0,\hat t(s_2)]) - \Diss_0(z;[0,\hat t(s_1)])\big)\\
 	&= s_2 - s_1 -\mbox{Diss}_{\RR}(z;[\hat{t}(s_1),\hat{t}(s_2)])\leq s_2-s_1.
\end{aligned}
\end{equation}
Therefore, after constant continuation on all $I_n$, $n=0, ..., N$, i.e.,
\begin{equation*}
    \hat t(s) = 0 \quad \forall\, s\in I_0, \qquad \hat t(s)=t_n\quad \forall\, s\in I_n, \; n = 1, \ldots, N,
\end{equation*}
the function $\hat t$ is Lipschitz-continuous  
with Lipschitz constant $L\leq 1$. So there holds $\hat{t}\in W^{1,\infty}(0,S)$, i.e., the required regularity of $\hat t$, and 
$\hat{t}^{\prime}(s)\geq 0$ for almost all $s\in(0,S)$, i.e., the sign condition in \eqref{eq:complementary}.
Moreover, by construction of $\hat{t}$, we obtain that $M$, the set where $\hat{t}$ is strictly increasing, see \eqref{eq:defsetM},
is given by 
\begin{equation}
    M=(0,S)\setminus\bigcup_{n=0}^{N}I_n=:\bigcup_{m=1}^M G_m, \label{eq:defM}
\end{equation}
where $G_m\subset(0,S)$ are open intervals.
  
Given $\hat t$, we define $\hat{z}$ as composition of $z$ and $\hat{t}$ in the parts, where $z$ is continuous and in jumps we choose $\hat{z}$ 
as the linear function that connects the left-hand and right-hand limits of $z$ such that $\hat z$ reads 
 \begin{equation}
	 \begin{aligned}\label{eq:defhatz}
 		\hat{z}(s) :=\begin{cases}
	           		z(\hat{t}(s)), & s\in(0,S]\setminus \bigcup_{n=0}^{N}I_n,\\[0.5ex]
			        z(t_n-)+\frac{z(t_n)-z(t_n-)}{s(t_n)-s(t_n-)}\, (s-s(t_n-)), & s\in I_n^-,\\[1ex]
			         z(t_n)+\frac{z(t_n+)-z(t_n)}{s(t_n+)-s(t_n)}\,(s-s(t_n)), & s\in I_n^+, \\[1ex]
			         z_0+\frac{z(0+)-z_0}{\RR(z(0+) - z_0)}\,s, & s\in I_0, 
			\end{cases}
 	\end{aligned}
 \end{equation}
where $I_n^- :=[s(t_n-),s(t_n)]$, $I_n^+ :=[s(t_n),s(t_n+)]$, and $I_0$ as defined in \eqref{eq:defIn}. 
Note that, by construction, $\hat z(0) = z_0$, i.e., $\hat z$ satisfies the initial condition in \eqref{eq:initendcond}.
Next we show that $\hat z$ constructed in this way is Lipschitz continuous, too. 
Let us first consider an arbitrary interval $G_m$ and let $s_1,s_2\in G_m$ with $s_1<s_2$ be arbitrary. 
Then, by definition of $G_m$, $z$ is continuous on $[\hat t(s_1), \hat t(s_2)]$ and thus, similarly to \eqref{eq:hattLip}, 
the construction of $s$ in \eqref{eq:defs} yields 
 \begin{align}\label{eq:diffs}
 	s(\hat t(s_2))-\hat t(s_2)-s(\hat t(s_1))+\hat t(s_1)
 	=\mbox{Diss}_{\RR}(z;[\hat t(s_1),\hat t(s_2)]).
 \end{align}
With this at hand, assumption~\eqref{R3} and the construction of $\hat z$ in \eqref{eq:defhatz} yield
\begin{equation}\label{eq:hatzlip1}
\begin{aligned}
 	c\,\|\hat{z}(s_2)-\hat{z}(s_1)\|
 	&\leq \RR(\hat{z}(s_2)-\hat{z}(s_1)) \\
 	&\leq \Diss_\RR(\hat{z};[s_1,s_2]) \\
 	&=\Diss_\RR(z;[\hat{t}(s_1),\hat{t}(s_2)])\\
	&=s_2-\hat{t}(s_2)-s_1+\hat{t}(s_1)\leq s_2-s_1,
\end{aligned} 
\end{equation}
where we exploited the monotonicity of $\hat t$ in the last estimate.
Next let us consider an arbitrary interval $I_n^-$ and arbitrary points $s_1, s_2 \in I_n^-$. 
Using the definition of $s$  in \eqref{eq:defs} and $\Diss_0$ in \eqref{eq:Diss0} and again \eqref{eq:dissRadd}, one obtains
\begin{equation}\label{eq:estsbyR}
\begin{aligned}
    s(t_n) - s(t_n-)
    & = \Diss_{\RR}(z;[0,t_n]) - \Diss_{\RR}(z;[0,t_n-]) \\
    &= \RR(z(t_n)-z(t_n-)).
\end{aligned}
\end{equation} 
Together with the construction of $\hat z$ in jumps according to \eqref{eq:defhatz}, this gives
\begin{equation}\label{eq:hatzlip2}
\begin{aligned}
 	\|\hat{z}(s_2)-\hat{z}(s_1)\|
 	& = \frac{\|z(t_n)-z(t_n-)\|}{s(t_n)-s(t_n-)} \, |s_2-s_1| \\
 	& \leq \frac{\|z(t_n)-z(t_n-)\|}{\RR(z(t_n)-z(t_n-))}\, |s_2-s_1| \leq \frac{1}{c}\,|s_2-s_1|,
\end{aligned}
\end{equation}
where we again used \eqref{R3} for the last estimate. Completely analogously, one derives the same estimate 
for arbitrary $s_1, s_2 \in I_n^+$. In $I_0$, we similarly obtain 
\begin{equation*}
    \|\hat{z}(s_2)-\hat{z}(s_1)\| = \frac{\|z(0+)-z_0\|}{\RR(z(0+)-z_0)}\, |s_2-s_1| \leq \frac{1}{c}\,|s_2-s_1| \quad \forall\, s_1, s_2 \in I_0.
\end{equation*}
Since $\hat{z}$ is additionally continuous by construction, see \eqref{eq:defhatz}, 
this estimate together with \eqref{eq:hatzlip1} and \eqref{eq:hatzlip2} implies that $\hat z$ is Lipschitz-continuous on $[0,S]$ 
with Lipschitz constant $1/c$. Consequently, we obtain the desired regularity of a relaxed solution, i.e., $\hat{z}\in W^{1,\infty}(0,S;\R^d)$.
Furthermore, $\hat{t},\hat{z}$ are constructed in such a way that $z\in\mathfrak{P}(\hat{t},\hat{z})$.  \\
 
\textit{2. Complementary, normalization, and energy identity outside of jumps}\\
Now that we have defined $\hat t$ and $\hat z$, we still need to define $\hat\ell$ outside the set $M$ so that 
\eqref{eq:complementary}--\eqref{eq:energy} are fulfilled. On $M,$ however, $\hat\ell$ is fixed by \eqref{eq:newdefell} 
and set to $\hat\ell = \ell \circ \hat t$. To show \eqref{eq:complementary}, let us assume by contrary that there exists a 
set $E\subset M$ of positive measure such that 
\begin{equation}\label{eq:complcontrary}
	\abst(-D_z\hat{\II}(s,\hat{z}(s)),\partial\RR(0)) > 0 \quad \text{a.e.\ in } E.
\end{equation} 
For the image of $E$ under $\hat t$, we obtain $|\hat t(E)| = \int_E |\hat t'(s)| \, ds > 0$,
since $\hat t$ is Lipschitz continuous and monotonically increasing on $M$. 
Then, in view of $\hat z = z \circ \hat t$ and $\hat\ell = \ell \circ \hat t$ and the definition of $\hat \II$ in \eqref{eq:defhatI}, 
\eqref{eq:complcontrary} implies $\abst(-D_z \II(t, z(s)),\partial\RR(0)) > 0$ a.e.\ in $\hat t(E)$, which contradicts \eqref{eq:locstab}. 
Thus we obtain
\begin{equation}
 	\abst(-D_z\hat{\II}(s,\hat{z}(s)),\partial\RR(0))=0 \quad \text{f.a.a.\ } s\in M\label{eq:dist=0}
\end{equation} 
Moreover, due to constant continuation outside of $M$, there holds $\hat{t}'(s)=0$ for almost all $s\in (0,S)\setminus M$ 
and hence, \eqref{eq:complementary} is valid independent of the choice of $\hat{\ell}$ on $(0,S)\setminus M$. 

Next we show that \eqref{eq:normalization} is satisfied almost everywhere on $M$. By exploiting \eqref{eq:dist=0}, this is
 \begin{align}
 	\hat{t}'(s)+\RR(\hat{z}'(s))=1 \quad \mbox{f.a.a. }s\in M.
 \end{align}
Because $M$ is a finite union of intervals, see \eqref{eq:defM}, this is in turn equivalent to
 \begin{align}
 	\int_{s_1}^{s_2}\hat{t}'(r)+\RR(\hat{z}'(r))dr=s_2-s_1 \quad\forall~ s_1,s_2\in G_m,~m=1,\dots, M.\label{eq:intform}
 \end{align} 
Using \eqref{eq:diffs} and Lemma \ref{lem:DissR} yield for arbitrary $s_1,s_2\in G_m$
\begin{align*}
	\int_{s_1}^{s_2}\hat{t}'(r)+\RR(\hat{z}'(r))dr
	=~&\hat{t}(s_2)-\hat{t}(s_1)+\mbox{Diss}_\RR(\hat{z};[s_1,s_2])\\
	=~&\hat{t}(s_2)-\hat{t}(s_1)+\mbox{Diss}_\RR(z;[\hat{t}(s_1),\hat{t}(s_2)])\\
	=~&s(\hat{t}(s_2))-s(\hat{t}(s_1))\\
	=~&s_2-s_1,
\end{align*}
which is \eqref{eq:intform} such that \eqref{eq:normalization} indeed holds a.e.\ in $M$.

Concerning the energy identity in \eqref{eq:energy}, take an arbitrary interval $G_m$ and let $s_1, s_2 \in G_m$, $s_1 < s_2$, be arbitrary, too.
By definition of $\hat z$ in \eqref{eq:defhatz}, there holds $\hat z = z \circ \hat t$ in $G_m$ and therefore, 
the energy inequality \eqref{eq:energyineqlocsol} with $t_1 = \hat t(s_1)$ and $t_2 = \hat t(s_2)$ yields
\begin{equation}\label{eq:energy1}
\begin{aligned}
    & \II(\hat{t}(s_2),\hat{z}(s_2))+\mbox{Diss}_\RR(\hat z;[s_1,s_2]) - \II(\hat{t}(s_1),\hat{z}(s_1))\\
    &\qquad = \II(\hat{t}(s_1),z(\hat t(s_2)))+\mbox{Diss}_\RR(z;[\hat t(s_1),\hat t(s_2)]) - \II(\hat{t}(s_1),z(\hat{t}(s_1)))\\
    &\qquad \leq - \int_{\hat t(s_1)}^{\hat t(s_2)} z(t) \, d \ell(t)\\
    &\qquad = - \int_{s_1}^{s_2} (z \circ \hat{t})(r) \, d(\ell\circ \hat{t})(r) = - \int_{s_1}^{s_2} \hat{z}(r) \, d\hat{\ell}(r),
 \end{aligned}
\end{equation}
where we used the change of variables formula for Kurzweil integrals from \cite[Thm. 6.1]{Bensimhoun}, which is applicable 
due to the continuity of $\hat t$. Note that we again used $\hat\ell = \ell\circ \hat t$ in $G_m\subset M$.
For the dissipation, Lemma~\ref{lem:DissR} along with \eqref{eq:dist=0} yields
\begin{equation}\label{eq:energy2}
    \Diss_{\RR}(\hat z;[s_1, s_2]) = \int_{s_1}^{s_2} \RR(\hat z'(r)) +\|\hat{z}'(r)\| \abst(-D_z\hat{\II}(r,\hat{z}(r)),\partial\RR(0))\, dr.
\end{equation}
Since $\hat z \in W^{1,\infty}(0,S;\R^d)$, we are allowed to apply the formula of integration by parts according to \cite[Prop.~1.12]{Krejci2009} to 
the Kurzweil integral on the right hand side of \eqref{eq:energy1}, which results in 
\begin{equation*}
    - \int_{s_1}^{s_2} \hat{z}(r) \, d\hat{\ell}(r)
    =  \int_{s_1}^{s_2} \langle \hat{\ell}(r),\hat{z}'(r)\rangle \, d r + \dual{\hat\ell(s_2)}{\hat z(s_2)} - \dual{\hat\ell(s_1)}{\hat z(s_1)},
\end{equation*}
where we have already rewritten the Kurzweil integral on the right hand side as a Lebesgue integral according to \cite[Prop.~1.10]{Krejci2009} 
and the regularity of $\hat{z}$.
Inserting this together with \eqref{eq:energy2} in \eqref{eq:energy1} implies in view of the definition of $\EE$ in \eqref{eq:defEE} that
\begin{equation}\label{eq:energy3}
\begin{aligned}
    \EE(\hat{z}(s_2))+\int_{s_1}^{s_2} \RR(\hat{z}'(r))+\|\hat{z}'(r)\|\abst(-D_z\hat{\II}(r,\hat{z}(r)),\partial\RR(0)) \, dr \qquad\qquad & \\[-1ex]
    \leq \EE(\hat z(s_1))+\int_{s_1}^{s_2} \langle \hat{\ell}(r),\hat{z}'(r)\rangle \, dr,
\end{aligned}    
\end{equation}
Lemma~\ref{lem:ygrene} yields that \eqref{eq:energy3} is even satisfied with equality and thus, we obtain the desired energy identity 
for arbitrary $s_1, s_2 \in G_m$.\\

\textit{3. Construction of $\hat{\ell}$ in jumps}\\
To motivate our definition of $\hat\ell$ in jumps, let us take a closer look at the connection of the normalization condition \eqref{eq:normalization} 
and the energy identity in jumps. As $\hat t$ is constant there, \eqref{eq:normalization} reads
\begin{equation}\label{eq:normalinjumps}
	\RR(\hat{z}'(s))+\|\hat{z}'(s)\|\abst(-D_z\hat{\II}(s,\hat{z}(s)),\partial\RR(0))=1 \quad \text{f.a.a.\ } s\in \bigcup_{n=0}^{N} I_n.
\end{equation}
The term on the left hand side is exactly the expression that also arises in the energy identity. 
So, if we consider an arbitrary jump interval $I_n$ and arbitrary $s_1, s_2 \in I_n$ and if we suppose that 
$\hat\ell$ is chosen in $I_n$ such that \eqref{eq:normalinjumps} holds, then the energy identity in \eqref{eq:energy} becomes
\begin{equation*}
\begin{alignedat}{3}
    s_2 - s_1 
    & = \int_{s_1}^{s_2} \RR(\hat{z}'(r))+\|\hat{z}'(r)\|\abst(-D_z\hat{\II}(r,\hat{z}(r)),\partial\RR(0)) \, dr  \\
    & = \EE(\hat{z}(s_1)) - \EE(\hat z(s_2)) + \int_{s_1}^{s_2} \langle \hat{\ell}(r),\hat{z}'(r)\rangle \, dr & & \text{(by \eqref{eq:energy})}\\
    & = -\int_{s_1}^{s_2}\langle D_z\EE(\hat{z}(t)),\hat{z}'(t)\rangle\, dt + \int_{s_1}^{s_2} \langle \hat{\ell}(r),\hat{z}'(r)\rangle \, dr \\
    & = -\int_{s_1}^{s_2}\langle D_z\hat{\II}(r,\hat{z}(r)),\hat{z}'(r)\rangle \, dr,
\end{alignedat}    
\end{equation*}
where we used the definition of $\hat\II$ in \eqref{eq:defhatI}. Since this equality must hold for every $s_1, s_2 \in I_n$, provided that $\hat\ell$ 
is such that the energy identity holds, we see that $\hat\ell$ satisfying the normalization condition \eqref{eq:normalinjumps} fulfills the 
energy identity, if and only if
\begin{equation}\label{eq:conditionell}
	\langle D_z\hat{\II}(s,\hat{z}(s)),\hat{z}'(s)\rangle+1=0 \quad \text{f.a.a.\ } s\in \bigcup_{n=0}^{N} I_n.
\end{equation}
To summarize, in order to ensure \eqref{eq:normalization} and \eqref{eq:energy} to hold in jumps, we must construct $\hat\ell$ such that 
\eqref{eq:normalinjumps} and \eqref{eq:conditionell} are fulfilled. In view of $\hat \II(z) = \frac{1}{2} \dual{Az}{z} + \FF(z) - \dual{\hat\ell}{z}$, 
\eqref{eq:conditionell} leads to the natural choice 
\begin{equation}
    \hat{\ell}(s) := A\hat{z}(s)+D_z\FF(\hat{z}(s))+\frac{\hat{z}'(s)}{\|\hat{z}'(s)\|^2} 
    \quad \forall\,  s\in \bigcup_{n=0}^{N} \mathring{I}_n\,,\label{eq:defhatell}
\end{equation}
as \eqref{eq:conditionell} and thus the energy identity are trivially fulfilled then. Note that 
$\hat{z}'(s)$ is constant and different from zero in $\bigcup_{n=0}^{N} \mathring{I}_n$ such that $\hat{\ell}(s)$ is well defined 
and due to the regularity of $\hat z$ an element of $W^{1,\infty}(I_n;\R^d)$ for all $n = 0, ..., N$.
Note moreover that $\mathring{I}_n = \emptyset$, if $\hat z'$ vanishes there.
It remains to prove \eqref{eq:normalinjumps} for $(\hat{t},\hat{z})$ together with $\hat{\ell}$ as defined in \eqref{eq:defhatell}.
For that purpose, let us investigate the dissipation functional in the jumps.
As seen in \eqref{eq:estsbyR}, in a jump point $t_n$, there holds $\RR(z(t_n)-z(t_n-)) = s(t_n)-s(t_n-)$ and completely analogously, one shows 
$\RR(z(t_n+)-z(t_n)) = s(t_n+)-s(t_n)$. Using the monotonicity of $s$ along with the positive 1-homogeneity of $\RR$ 
and the structure of $\hat{z}$ in the jump parts $I_n^-,I_n^+$, see \eqref{eq:defhatz}, this implies
\begin{equation}
	\RR(\hat{z}'(s))=\RR\bigg(\frac{z(t_n)-z(t_n-)}{s(t_n)-s(t_n-)}\bigg) = 1 \quad \forall \, s\in I_n^-\label{eq:estR1}
\end{equation} 
as well as 
\begin{equation}
	\RR(\hat{z}'(s))=\RR\bigg(\frac{z(t_n+)-z(t_n)}{s(t_n+)-s(t_n)}\bigg) = 1 \quad \forall \, s\in I_n^+,\label{eq:estR2}
\end{equation} 
while, on $I_0$, we have by construction of $\hat z$ in \eqref{eq:defhatz} that $\RR(\hat z'(s)) = 1$ for all $s\in I_0$.
Now let $s\in \bigcup_{n=0}^N \mathring{I}_n$ be arbitrary.
Inserting \eqref{eq:defhatI} in the left hand side in \eqref{eq:normalinjumps} and employing Lemma~\ref{lem:Rstar} yield
\begin{align*}
	& \RR(\hat{z}'(s))+\|\hat{z}'(s)\|\abst(-D_z\hat{\II}(s,\hat{z}(s)),\partial\RR(0))\\
	& \quad =\RR(\hat{z}'(s))+\|\hat{z}'(s)\|\abst\bigg(\frac{\hat{z}'(s)}{\|\hat{z}'(s)\|^2},\partial\RR(0)\bigg)\\
	& \quad =\RR(\hat{z}'(s))+\sup_{v\in\R^d,\|v\|\leq\|\hat{z}'(s)\|}\bigg(\Big\langle\frac{\hat{z}'(s)}{\|\hat{z}'(s)\|^2},v\Big\rangle-\RR(v)\bigg)\\
	& \quad =
	\RR(\hat{z}'(s))+\sup_{0<\alpha<1} \alpha \sup_{v\in\R^d,\|v\|=\|\hat{z}'(s)\|}\bigg(\Big\langle\frac{\hat{z}'(s)}{\|\hat{z}'(s)\|^2},v\Big\rangle-\RR(v)\bigg)\\
	& \quad =\RR(\hat{z}'(s))+\sup_{0<\alpha<1} \alpha \sup_{v\in\R^d,\|v\|=\|\hat{z}'(s)\|}\bigg(\Big\langle\frac{\hat{z}'(s)}{\|\hat{z}'(s)\|^2},v\Big\rangle-\RR(\hat{z}'(s))\bigg)\\
	& \quad =\Big\langle\frac{\hat{z}'(s)}{\|\hat{z}'(s)\|^2},\hat{z}'(s)\Big\rangle=1,
\end{align*}
where we exploited the symmetry of $\RR$ by assumption, the 1-homogeneity of $\RR$, and \eqref{eq:estR1} and \eqref{eq:estR2}. 
Thus we have verified \eqref{eq:normalinjumps}, which, together with \eqref{eq:conditionell}, which holds by construction of $\hat\ell$ according to
\eqref{eq:defhatell}, implies \eqref{eq:complementary} and \eqref{eq:energy} in the jumps, as explained above. 
Therefore, the energy identity \eqref{eq:energy} holds for all $s_1, s_2\in I_n$, $n = 0, \dots, N$, and all $s_1, s_2\in G_m$, $m = 1,\dots, M$, 
and consequently, the continuity of $\hat z$ implies that it holds for all $s_1, s_2 \in [0,S]$.\\

\textit{4. Regularity of $\hat{\ell}$}\\
To summarize, we have seen that with $\hat\ell$ defined by
\begin{equation}
	\hat{\ell}(s)=\begin{cases}
				\ell(\hat{t}(s)), & s\in [0,S]\setminus \bigcup_{n=0}^{N} I_n,\\
				A\hat{z}(s)+D_z\FF(\hat{z}(s))+\frac{\hat{z}'(s)}{\|\hat{z}'(s)\|^2}, &  \bigcup_{n=0}^{N} \mathring{I}_n,
	\end{cases}, \label{eq:defhatell2}
\end{equation}
all conditions of a relaxed solution from Definition~\ref{def:rexsol} are fulfilled. 
It remains to verify the required smoothness of $\hat\ell$.  Due to $\ell \in BV([0,T];\R^d)$ and the monotonicity of $\hat t$, the 
composition $\ell\circ \hat t$ is a function in $BV([0,S];\R^d)$. 
Moreover, since $\hat z'$ is constant and non-zero in $\mathring{I}_n$, $A\hat{z}(s)+D_z\FF(\hat{z}(s))+\frac{\hat{z}'(s)}{\|\hat{z}'(s)\|^2}$ 
inherits the regularity of $\hat z$ and is therefore continuous. 
Thus, as a piecewise composition of finitely many functions of bounded variation, 
$\hat\ell$ is a function of bounded variation, too, i.e., $\hat{\ell}\in BV([0,S];\R^d)$ as required.
\end{proof}

\begin{remark}
	The symmetry of $\RR$ is necessary to fulfill the normalization condition \eqref{eq:normalization} 
	with $\hat{\ell}$ as chosen in \eqref{eq:defhatell}. This can be seen as follows.
	Let $\RR:\R^2\to\R$ be defined by $\RR(z)=\frac{1}{2}|z_1|+|z_2|$ and suppose that there is a jump interval $I$ with constant $\hat z'$ given by
	$\hat{z}'(s) = (1,\frac{1}{2})^\top$ for all $s\in I$. Then we have $\RR(\hat{z}'(s))=1$ and 
	$\frac{\hat{z}'(s)}{\|\hat{z}'(s)\|^2}=(\frac{4}{5},\frac{2}{5})^\top$ for all $s\in I$. For the distance to $\partial \RR(0)$, we thus obtain in 
	light of \eqref{eq:defhatell} that 
    \begin{equation*}
    \begin{aligned}
        \abst(-D_z\hat{\II}(s,\hat{z}(s)),\partial\RR(0))
        = \abst\Big(\frac{\hat{z}(s)}{\|\hat{z}'(s)\|^2}, [-\tfrac{1}{2},\tfrac{1}{2}] \times [-1,1]\Big) = \frac{3}{10} \quad \forall\, s\in I.
    \end{aligned}
    \end{equation*}    	
	Due to $\hat t'(s) = 0$ for all $s\in I$, we end up with
	\begin{equation*}
		\hat{t}'(s)+\RR(\hat{z}'(s))+\|\hat{z}'(s)\|\abst(-D_z\hat{\II}(s,\hat{z}(s)),\partial\RR(0))=1+\sqrt{\frac{5}{4}}\frac{3}{10}>1 \quad \forall\, s\in I
	\end{equation*}
	such that \eqref{eq:normalization} is indeed violated.
\end{remark} 

\begin{remark}
	The symmetry of $\RR$ together with \eqref{R1} and \eqref{R2} automatically implies that $\RR$ is a scaled version of the norm, 
	i.e., there exists an $\alpha \geq 0$ such that $\RR(\cdot)=\alpha \|\cdot\|$. 
	This can be seen as follows. Let $e\in \R^d$ be a unit vector and define $\alpha :=\RR(e)$. Then, for $y\neq 0$, the symmetry condition in 
	\eqref{eq:symm} implies $\RR\big(\frac{y}{\|y\|}\big)=\RR(e)=\alpha$ such that $\RR(y)=\alpha \|y\|$ for all $y\in \R^d$
	due to the positive homogeneity of $\RR$.
\end{remark}

\begin{remark}
	The reason why we assume in Theorem~\ref{thm:locstab}
	that $z$ has only finitely many jumps is to guarantee the required regularity of $\hat\ell$, i.e., $\hat{\ell}\in BV([0,S];\R^d)$. 
    Note that, according to \eqref{eq:defhatell2}, $\hat\ell$ is continuous except for the points in the set $\bigcup_{n=0}^{N}\{s(t_n-),s(t_n),s(t_n+)\}$ 
    and thus, 
    if there are countably many of those points, the total variation of $\hat\ell$ may become infinite. 
    At least in one dimension, i.e., if $d=1$, there is however no alternative to the choice of $\hat\ell$ in \eqref{eq:defhatell2}, as it is easily seen that
    this construction of $\hat\ell$ in the jumps is the only way to guarantee \eqref{eq:conditionell} and equivalently the energy identity, while 
    $\hat\ell$ is determined by $\ell\circ \hat t$ outside the jumps.
    Therefore, in order to allow for countable many jumps of $z$ without destroying the regularity of $\hat\ell$,
    we need to modify the construction of $\hat z$ in the jumps. Let us again consider the one dimensional case. 
    If one aims to avoid a jump of $\hat\ell$ at $s = s(t_n-)$,  one has to choose $\hat z$ in the jump from $s(t_n-)$ to $s(t_n)$ such that 
    $\ell(\hat{t}(s))=A\hat{z}(s+)+D_z\FF(\hat{z}(s+))+\frac{\hat{z}'(s+)}{|\hat{z}'(s+)|^2}$. Thanks to the continuity of $\hat z$, this 
	can be reformulated as $D_z\II(\hat{t}(s),\hat{z}(s)) + \frac{\hat{z}'(s+)}{|\hat{z}'(s+)|^2} = 0$, which results in the following condition for $\hat z$
	at the beginning of the jump
	\begin{equation}
		\hat{z}'(s+)= \frac{-D_z\II(\hat{t}(s),\hat{z}(s))}{|D_z\II(\hat{t}(s),\hat{z}(s))|^2},\label{eq:conditionz'}
	\end{equation}
	provided that $D_z\II(\hat{t}(s),\hat{z}(s)) \neq 0$. The condition in \eqref{eq:conditionz'} 
	will in general not be fulfilled by the linear interpolant from \eqref{eq:defhatz}, 
	but, of course, one can find other Lipschitz continuous functions
	satisfying \eqref{eq:conditionz'} as well as $\hat z(s(t_n-)) = z(t_n-)$ and $\hat z(s(t_n)) = z(t_n)$. 
    However, in view of \eqref{eq:defhatell} and the required boundedness of $\hat\ell$ as a function of bounded variation, 
    $\hat z'$ must be bounded away from zero in $(s(t_n-), s(t_n))$. This is however not possible, if $\hat z'$ is smooth and 
    $-D_z\II(\hat{t}(s),\hat{z}(s))$ and $z(\hat{t}(s))-z(\hat{t}(s)-)$ have a different sign. If $\hat z'$ is not smooth, an additional 
    discontinuity in $\hat\ell$ arises, cf.\ \eqref{eq:defhatell}, contradicting again the required regularity of $\hat\ell$, if this happens 
    more than finitely many times with a jump height that does not tend to zero sufficiently fast. 
    All in all, we see that the construction of $\hat\ell$ in the presence of countably many jumps of $z$ is very involved and seems hardly to 
    be possible in general.
\end{remark}

\subsection{A second counterexample}\label{sec:extwo}

In order to further investigate the relation between the relaxed solution concept and the notion of local solutions, let 
us return to the counterexample from Section~\ref{sec:acounterexample} and slightly modify it. 
While energy and dissipation are left unchanged, see \eqref{eq:dissandenergy}, we now consider 
\begin{align}\label{eq:seqelln}
	\ell_n(t)=\begin{cases}
				0,& t\in[0,1],\\
				\frac{n}{2}t-\frac{n}{2}, &t\in(1,1+\frac{1}{n}),\\
				0, &t\in[1+\frac{1}{n},2]
			\end{cases}
\end{align}
as the sequence of loads. This sequence converges pointwise everywhere to $\ell \equiv 0$. 
Moreover, it converges weakly$*$ in $\BV(0,T)$ to zero, too, but of course not in the intermediate sense, since 
$|D\ell_n|(0,2) = 1 \neq 0 = |D\ell|(0,2)$ for all $n\in \N$.
The initial state is again set to $z_0 = 0$.  
By direct calculations one verifies that a relaxed solution associated with $\ell_n$ is again given by
\eqref{eq:hatzex} and \eqref{eq:hattex} along with $S_n = 5/2$ and 
\begin{equation}
    \hat{\ell}_n(s)=\ell_n(\hat{t}_n(s))=
    \begin{cases}
        0, &s\in[0,1],\\
        \frac{\frac{n}{2}}{1+\frac{n}{2}}s-\frac{\frac{n}{2}}{1+\frac{n}{2}}, &s\in (1,\frac{3}{2}+\frac{1}{n}),\\
        0, &s\in[\frac{3}{2}+\frac{1}{n},\frac{5}{2}]
    \end{cases}
\end{equation}
instead of \eqref{eq:hatellex}. Note that this is not only a relaxed solution, but also a normalized $\mathfrak{p}$-parametrized 
BV solution, as no jump occurs, i.e., $\hat t(s) > 0$ everywhere, and both concepts coincide in this case.

Again the limits of $\hat z$ and $\hat t$  (w.r.t.\ pointwise convergence as well as weak$*$ convergence in $W^{1,\infty}(0,S)$)  are given by the 
functions in \eqref{eq:limitzt}, but $\hat\ell_n$ now converges to 
\begin{equation}\label{eq:hatelllimit}
    \hat{\ell}(s) = 
    \begin{cases}
        0, &s\in[0,1],\\
        s-1, &s\in (1,\frac{3}{2}),\\
        0, &s\in[\frac{3}{2},\frac{5}{2}].
    \end{cases}
\end{equation}
We observe that $\hat\ell$ is the pointwise limit and the limit w.r.t.\ intermediate convergence.
Again, the limit is no normalized, $\mathfrak{p}$-parametrized BV solution associated with $\ell$, since 
$\hat\ell$ from \eqref{eq:hatelllimit} is not compatible with $\ell \equiv 0$ 
in the viscous jump $(1,\frac{3}{2})$ according to the condition in \eqref{eq:defell}.
However, as all other requirements in Definition~\ref{def:paramsol} are satisfied, i.e., \eqref{eq:initendcond}--\eqref{eq:energy}, 
and, since $\hat\ell = \ell\circ \hat t$ in $M = (0,\frac{5}{2})\setminus (1,\frac{3}{2})$, the limit is indeed 
a relaxed solution in accordance with our findings in Theorem~\ref{thm:stability}.
Note in this context that $\hat\ell$ and $\hat z$ satisfy \eqref{eq:defhatell} in the viscous jump $(1,\frac{3}{2})$.

Let us now consider the example in physical time. Here we observe that $z_n\in W^{1,\infty}(0,2)$, defined by 
\begin{align}\label{eq:seqzn}
	z_n(t)=\begin{cases}
				0,& t\in[0,1],\\
				\frac{n}{2}t-\frac{n}{2}, &t\in(1,1+\frac{1}{n}),\\
				\frac{1}{2}, &t\in[1+\frac{1}{n},2],
				\end{cases},
\end{align}    
satisfies \eqref{eq:ris} a.e.\ in $(0,2)$, where energy and dissipation are given by \eqref{eq:dissandenergy} and $\ell_n$ is as defined in \eqref{eq:seqelln}.
Thus, $z_n$ is even a \emph{differentiable solution} and consequently a local solution, too. 
Due to the uniform convexity of the energy in \eqref{eq:dissandenergy}, the differential solution is unique (if it exists). Therefore, the only 
differential solution associated with the limit $\ell \equiv 0$ is $z \equiv 0$. As the initial state $z_0 = 0$ is locally stable, this is the only physically 
meaningful solution in the absence of external loading. But the sequence in \eqref{eq:seqzn} converges pointwise to
\begin{equation}\label{eq:limitzn}
	\tilde z(t)=\begin{cases}
			0, &t\in[0,1]\\
			\frac{1}{2} &t\in(1,2].
		\end{cases}
\end{equation}
This function however is not even a local solution, since the energy inequality \eqref{eq:energyineqlocsol} is not fulfilled.
To see this, observe that, for every  $t_2>1$, we obtain
\begin{equation}\label{eq:wrongsign}
    \frac{1}{8} = \II(t_2,\tilde z(t_2))+\mbox{Diss}_\RR(\tilde z;[0,t_2]) 
    > \II(0, \tilde z(0)) - \int_0^{t_2} \tilde z(r) \,\d\ell(r) = 0.
\end{equation}
A graphic interpretation of this observation is as follows: Due to the external force $\ell_n$, 
the state $z_n$ is raised to a higher energy level. As $n$ is increased, this shifting is accelerated such that, in the limit, 
the state remains on the higher energy level although the load vanishes. 
As a consequence, this increase of energy is not compensated by $\ell$ and the energy inequality is violated. 
This example shows that not only the notion of normalized $\mathfrak{p}$-parametrized BV solutions, 
but even the concept of local solutions is not stable w.r.t.\ a sequence of loads converging weakly$*$ in $\BV(0,T)$.

Moreover, it can be shown that the differential solution from \eqref{eq:seqzn} is also a \emph{global energetic solution}.
For the precise definition of this solution concept see \cite[Definition~2.1.2]{MRRIS}.
Since every global energetic solution is automatically a local solution,  the above passage to the limit shows
that the concept of global energetic solutions is not stable w.r.t.\ weak$*$ convergence of the loads, too.
In summary, we see that the relaxed solution concept is the only one which is stable w.r.t.\ weak$*$ convergence of loads.

Let us finally investigate if there is another local solution that coincides with the limit $(\hat z, \hat t)$
in the sense that $\bar z \in \mathfrak{P}(\hat t, \hat z)$, cf.\ \eqref{eq:defP}.
For this purpose, we transform the limit $\hat{z}$ back into physical time, which yields
\begin{equation*}
    \bar z(t)\in\hat{z}(\hat{t}^{-1}(t)) =
    \begin{cases} 
        0, &t\in[0,1),\\
        [0,\frac{1}{2}], &t=1,\\
        \frac{1}{2},&t\in(1,2],
    \end{cases}
\end{equation*}
where $\hat{t}^{-1}$ is the set-valued inverse of $\hat{t}$. Thus, for every $\bar z \in \mathfrak{P}(\hat t, \hat z)$, the 
inequality in \eqref{eq:wrongsign} holds true, which shows that no element of $\mathfrak{P}(\hat t, \hat z)$ is a local solution.
Even if we consider a modified load corresponding to the transformation of $\hat\ell$ from \eqref{eq:hatelllimit} into 
physical time, i.e., 
\begin{equation*}
    \bar\ell(t)\in\hat{\ell}(\hat{t}^{-1}(t)) = 
    \begin{cases} 
        0, &t\neq 1.\\
        [0,\frac{1}{2}], &t=1,
    \end{cases}
\end{equation*} 
the inequality in \eqref{eq:wrongsign} will still be valid, since the Kurzweil integral on the right side of \eqref{eq:wrongsign} 
is always zero, no matter how one chooses $\bar\ell$ in $t=1$, see \cite[Corollary 2.14]{Tvrdy}. This shows that, 
in general, one cannot interpret a relaxed solution $(S,\hat{t},\hat{z},\hat{\ell})$ as a local solution.

The above example illustrates the following: the price one has to pay for a solution concept that is stable w.r.t.\ weak$*$ convergence 
of the loads is that ``solutions'' are allowed that are not meaningful from a physical perspective. 
As already mentioned, the only reasonable solution of \eqref{eq:ris} without external loads (i.e., $\ell \equiv 0$) and
with a uniformly convex energy and a locally stable initial state $z_0$ is $z \equiv z_0$. The relaxed solution however jumps 
at $t=1$ from $z_0 = 0$ to $1/2$ without any impact of an external load, which makes of course no sense at all.

\section{Conclusion}

The two examples from Sections~\ref{sec:acounterexample} and \ref{sec:extwo} illustrate that established solution concepts 
are not stable w.r.t.\ loads converging in $BV([0,T];\R^d)$ in the sense that the limit of an associated sequence of solutions to the 
rate independent system need not be a solution any more. With regard to normalized $\mathfrak{p}$-parametrized 
BV solutions this even occurs for loads converging in the intermediate sense, as the example in Section~\ref{sec:acounterexample} shows. 
Concerning local (and thus also global energetic) solutions, the example in Section~\ref{sec:extwo} shows the same lack of stability 
for loads converging weakly$*$ in $BV([0,T];\R^d)$. 
As the analysis in Section~\ref{sec:stability} shows, it is possible to design a relaxed solution concept that is stable w.r.t.\ weak$*$ convergence 
of the loads. However, the considerations in Section~\ref{sec:plausibility} indicate that such a concept is even weaker than the notion of 
local solutions and thus not really meaningful in practice. Especially the example in Section~\ref{sec:extwo} shows that completely 
unphysical limits may arise, if the loads just converge weakly$*$ in $BV([0,T];\R^d)$, even in case of a uniformly convex energy.
This already demonstrates that a stable solution concept must necessarily allow for physically unreasonable solutions.

On the other hand, stability w.r.t.\ weak$*$ convergence of the loads is an essential property of a solution concept, as one can often hardly expect 
more than this type of convergence. This issue concerns the existence of optimal solutions as discussed at the end of Section~\ref{sec:stability}, 
but will very likely also arise when a given load in $BV([0,T];\R^d)$ is discretized in time.
All in all, our results indicate that it might make no sense to consider discontinuous loads when dealing with 
rate independent systems due to a lack of stability of physically meaningful solution concepts.


\begin{appendix}

\section{The dissipative distance in the smooth case}

\begin{lemma}\label{lem:DissR}
	Let $\RR:\R^d\to [0,\infty)$ comply with \eqref{R1}--\eqref{R3} and $z\in W^{1,1}(0,T;\R^d)$. Then 
	\begin{align}
		\textnormal{Diss}_\RR(z;[t_1,t_2])=\int_{t_1}^{t_2}\RR(z'(r))dr
	\end{align}
	for all $0\leq t_1\leq t_2\leq T$.
\end{lemma}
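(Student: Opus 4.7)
The plan is to prove the two inequalities separately, using the absolute continuity of $z\in W^{1,1}(0,T;\R^d)$ together with the convexity and positive $1$-homogeneity of $\RR$.

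For the easy direction ($\leq$), I would start from the fact that, for any partition $t_1=\xi_0<\xi_1<\dots<\xi_n=t_2$, absolute continuity gives $z(\xi_i)-z(\xi_{i-1})=\int_{\xi_{i-1}}^{\xi_i}z'(r)\,dr$. Since $\RR$ is convex and positively $1$-homogeneous, the standard Jensen-type inequality (which follows by rescaling Jensen's inequality to $(1/(\xi_i-\xi_{i-1}))\int z'$ and multiplying back) yields
\begin{equation*}
    \RR\bigl(z(\xi_i)-z(\xi_{i-1})\bigr)
    =\RR\Bigl(\int_{\xi_{i-1}}^{\xi_i}z'(r)\,dr\Bigr)
    \leq\int_{\xi_{i-1}}^{\xi_i}\RR(z'(r))\,dr.
\end{equation*}
Summing over $i$ and taking the supremum over all partitions gives $\Diss_\RR(z;[t_1,t_2])\leq\int_{t_1}^{t_2}\RR(z'(r))\,dr$.

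For the reverse direction ($\geq$), my plan is an approximation argument based on piecewise constant derivatives. The key observation is that, if $z$ has a step-function derivative $z'=\sum_i v_i\chi_{(\xi_{i-1},\xi_i)}$, then the partition $\{\xi_i\}$ achieves equality thanks to $1$-homogeneity:
\begin{equation*}
    \sum_i \RR\bigl(z(\xi_i)-z(\xi_{i-1})\bigr)
    =\sum_i(\xi_i-\xi_{i-1})\RR(v_i)
    =\int_{t_1}^{t_2}\RR(z'(r))\,dr,
\end{equation*}
and combined with the easy direction this forces equality for step derivatives. The lemma then follows once one establishes that both sides depend Lipschitz-continuously on $z$ w.r.t.\ $\|z'\|_{L^1(t_1,t_2;\R^d)}$, and invokes the density of step functions in $L^1(t_1,t_2;\R^d)$.

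The main obstacle is precisely the Lipschitz stability of $\Diss_\RR$ under $L^1$-perturbations of $z'$, because the supremum over partitions must be controlled uniformly. The route I would take is to exploit that \eqref{R3} together with convexity of $\RR$ yields $|\RR(v)-\RR(w)|\leq C\|v-w\|$, and then for any fixed partition apply the triangle inequality together with telescoping and absolute continuity to get
\begin{equation*}
    \Bigl|\sum_i\RR(u_1(\xi_i)-u_1(\xi_{i-1}))-\sum_i\RR(u_2(\xi_i)-u_2(\xi_{i-1}))\Bigr|
    \leq C\int_{t_1}^{t_2}\|u_1'(r)-u_2'(r)\|\,dr,
\end{equation*}
with constant independent of the partition. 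Passing to the supremum in $u_1$ and $u_2$ successively gives the required Lipschitz bound for $\Diss_\RR$; the analogous bound for $u\mapsto\int_{t_1}^{t_2}\RR(u'(r))\,dr$ is immediate from the pointwise Lipschitz estimate on $\RR$. Approximating $z'$ in $L^1$ by step functions and combining the equality on step derivatives with these two Lipschitz estimates then yields the claimed identity.
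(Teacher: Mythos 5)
Your proof is correct, and the easy direction ($\leq$, via Jensen on each subinterval) is identical to the paper's. For the reverse inequality, however, you take a genuinely different route. The paper works directly with difference quotients: it fixes a sequence of partitions with mesh $h\to 0$, defines the piecewise-constant function $\xi_h(t)=\bigl(z(\xi_i^h)-z(\xi_{i-1}^h)\bigr)/(\xi_i^h-\xi_{i-1}^h)$, observes $\Diss_\RR(z;[t_1,t_2])\geq\int_{t_1}^{t_2}\RR(\xi_h)\,dr$ by $1$-homogeneity, and then uses Lebesgue's differentiation theorem ($\xi_h\to z'$ a.e.) together with lower semicontinuity of $\RR$ and Fatou to pass to the limit. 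You instead prove the identity for step-function derivatives (where the natural partition attains the supremum exactly) and then propagate it to general $z'\in L^1$ by density, using the global Lipschitz estimate $|\RR(v)-\RR(w)|\leq C\|v-w\|$ --- which indeed follows from subadditivity (convexity plus $1$-homogeneity) and the upper bound in \eqref{R3} --- to control both $\Diss_\RR$ and the integral functional uniformly. Both arguments are valid; the paper's avoids invoking the Lipschitz bound on $\RR$ (it only needs lower semicontinuity and the lower bound in \eqref{R3}), while yours avoids the Lebesgue differentiation theorem and Fatou in favour of an approximation-and-stability scheme. One small point worth making explicit if you write this up: the Lipschitz stability of $\Diss_\RR$ under $L^1$-perturbations of the derivative requires the ``$\epsilon$-near-optimal partition'' argument (fix a partition nearly attaining $\Diss_\RR(u_1)$, compare termwise, then take $\epsilon\to 0$), not just a termwise estimate followed by a naive supremum, but that is exactly what you sketch and it closes cleanly.
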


\begin{proof}
	Let an arbitrary partition $t_1=\xi_0<\xi_1<\dots<\xi_{n-1}<\xi_n=t_2$ be given. Then applying Jensen's inequality gives
	\begin{align*}
		\RR(z(\xi_i)-z(\xi_{i-1}))=\RR\Big(\int_{\xi_{i-1}}^{\xi_i}z'(r)dr\Big)\leq \int_{\xi_{i-1}}^{\xi_i}\RR(z'(r))\, dr
	\end{align*}
	for all $i=1,\dots,n$.
	Summing up results in $\sum_{i=1}^n \RR(z(\xi_i)-z(\xi_{i-1}))\leq \int_{t_1}^{t_2}\RR(z'(r))dr$ and, as the partition was arbitrary, this gives
	$\mbox{Diss}_\RR(z;[t_1,t_2])\leq\int_{t_1}^{t_2}\RR(z'(r))dr$.
	
	To show the reverse estimate, let a sequence of partitions $(\{\xi_i^h\}_{i=1}^{n_h})_h$ with $h:=\max_{i=1,\dots,n_h}\xi_i^h-\xi_{i-1}^h\to 0$ be given. 
	We define 
	\begin{equation*}
		\xi_h(t):=\frac{z(\xi_i^h)-z(\xi_{i-1}^h)}{\xi_i^h-\xi_{i-1}^h},\quad t\in[\xi_{i-1}^h,\xi_{i}^h).
	\end{equation*}
     Then the 1-homogeneity of $\RR$ yields
	\begin{align*}
		\mbox{Diss}_\RR(z;[t_1,t_2])&\geq\sum_{i=1,\dots,n_h}\RR(z(\xi_i^h)-z(\xi_{i-1}^h))\\
		&=\sum_{i=1,\dots,n_h}\int_{\xi_{i-1}^h}^{\xi_{i}^h}\RR\bigg(\frac{z(\xi_i^h)-z(\xi_{i-1}^h)}{\xi_i^h-\xi_{i-1}^h}\bigg) \, dr
		=\int_{t_1}^{t_2}\RR(\xi_h(r)) \,dr.
	\end{align*} 
	By Lebesgue's differentiation theorem we obtain $z'(t)=\lim_{h\searrow 0}\xi_h(t)$  
	for almost all $t\in(0,T)$ such that the lower semicontinuity of $\RR$ implies $\liminf_{h\searrow0}\RR(\xi_h(t))\geq \RR(z'(t))$ 
	a.e.\ in $(0,T)$.	Thus Fatou's lemma yields
	\begin{align*}
		\mbox{Diss}_\RR(z;[t_1,t_2]) &\geq \liminf_{h\searrow0}\int_{t_1}^{t_2}\RR(\xi_h(r))\,dr\\
		&\geq \int_{t_1}^{t_2}\liminf_{h\searrow0}\RR(\xi_h(r))\, dr\geq \int_{t_1}^{t_2} \RR(z'(r))\, dr,
	\end{align*}
	which finally gives the claim.
\end{proof}

\section{Auxiliary results from convex analysis}

The following results are well known, we only refer to \cite{michael} and the references therein. For convenience of the reader, we present the proofs in detail.

\begin{lemma}\label{lem:ygrene}
	For every $(S,\hat{t},\hat{z},\hat{\ell})\in (0,\infty) \times W^{1,\infty}(0,S)\times W^{1,\infty}(0,S;\R^d)\times BV([0,S];\R^d)$ 
    and all $0\leq s_1\leq s_2\leq S$ there holds
	\begin{equation}
	\begin{aligned}\label{eq:ygrene}
		\EE(\hat{z}(s_2))
		+\int_{s_1}^{s_2}\RR(\hat{z}'(r))+\|\hat{z}'(r)\|\abst(-D_z\hat{\II}(r,\hat{z}(r)),\partial\RR(0)) \, dr \qquad & \\
		\geq\EE(\hat z(s_1))+\int_{s_1}^{s_2}\langle \hat{\ell}(r),\hat{z}'(r)\rangle \, dr . &
	\end{aligned}
	\end{equation}
\end{lemma}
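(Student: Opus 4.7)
The plan is to derive the inequality from a pointwise Fenchel--Young type inequality for the vanishing viscosity contact potential $\mathfrak{p}$, combined with the chain rule for $\EE \circ \hat z$. The main work is the pointwise estimate, after which the integrated statement is obtained by rearrangement.

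First I would establish the following pointwise inequality: for all $v, w \in \R^d$,
\begin{equation}\label{eq:pointwiseFY}
    \RR(v) + \|v\|\,\abst(w, \partial\RR(0)) \geq \dual{w}{v}.
\end{equation}
Since $\RR$ is convex, positively $1$-homogeneous, and satisfies \eqref{R3}, its Fenchel conjugate is the indicator function of $\partial\RR(0)$, i.e., $\xi \in \partial\RR(0)$ if and only if $\dual{\xi}{v} \leq \RR(v)$ for all $v \in \R^d$. Thus, for arbitrary $\xi \in \partial\RR(0)$, the Cauchy--Schwarz inequality yields
\begin{equation*}
    \dual{w}{v} = \dual{w-\xi}{v} + \dual{\xi}{v} \leq \|w-\xi\|\,\|v\| + \RR(v).
\end{equation*}
Taking the infimum over $\xi \in \partial\RR(0)$ gives \eqref{eq:pointwiseFY}.

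Next I would apply \eqref{eq:pointwiseFY} pointwise for almost every $r \in (s_1,s_2)$ with the choices $v = \hat z'(r)$ and $w = -D_z\hat\II(r,\hat z(r))$. Recalling from \eqref{eq:defhatI} that $D_z\hat\II(r,z) = D\EE(z) - \hat\ell(r)$, this gives
\begin{equation*}
    \RR(\hat z'(r)) + \|\hat z'(r)\|\,\abst\!\big(-D_z\hat\II(r,\hat z(r)), \partial\RR(0)\big) \geq -\dual{D\EE(\hat z(r))}{\hat z'(r)} + \dual{\hat\ell(r)}{\hat z'(r)}.
\end{equation*}
The regularity $\hat z \in W^{1,\infty}(0,S;\R^d)$ together with $\EE \in C^2(\R^d;\R)$ (which follows from \eqref{eq:F1}--\eqref{eq:F2}) allows the chain rule to be applied, so the map $r \mapsto \EE(\hat z(r))$ is absolutely continuous with derivative $\dual{D\EE(\hat z(r))}{\hat z'(r)}$. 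Moreover, since $\hat\ell \in BV([0,S];\R^d) \subset L^\infty(0,S;\R^d)$ and $\hat z' \in L^\infty$, all integrals on both sides are well defined and finite.

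Finally I would integrate the pointwise inequality over $(s_1,s_2)$ and use the fundamental theorem of calculus
\begin{equation*}
    \int_{s_1}^{s_2} \dual{D\EE(\hat z(r))}{\hat z'(r)}\, dr = \EE(\hat z(s_2)) - \EE(\hat z(s_1))
\end{equation*}
to obtain
\begin{equation*}
    \int_{s_1}^{s_2}\!\RR(\hat z'(r)) + \|\hat z'(r)\|\abst(-D_z\hat\II(r,\hat z(r)),\partial\RR(0))\, dr \geq \EE(\hat z(s_1)) - \EE(\hat z(s_2)) + \int_{s_1}^{s_2}\!\dual{\hat\ell(r)}{\hat z'(r)}\, dr,
\end{equation*}
which, after moving $\EE(\hat z(s_2))$ to the left hand side, is exactly \eqref{eq:ygrene}. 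I do not expect any serious obstacle: the only delicate point is that the Fenchel--Young step \eqref{eq:pointwiseFY} genuinely relies on $\RR$ being positively $1$-homogeneous (so that $\partial\RR(0)$ is the conjugate's sublevel set), but this is guaranteed by \eqref{R1}--\eqref{R2}.
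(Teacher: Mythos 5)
Your proof is correct and follows essentially the same route as the paper's: the core is the same Cauchy--Schwarz decomposition $\dual{w}{v}=\dual{w-\xi}{v}+\dual{\xi}{v}$ with $\xi\in\partial\RR(0)$, followed by the chain rule for $\EE\circ\hat z$ and integration. The only cosmetic difference is that you take the infimum over $\xi\in\partial\RR(0)$ whereas the paper selects, via compactness of $\partial\RR(0)$, a minimizer $w_r$ attaining the distance; both yield the identical pointwise inequality.
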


\begin{proof}
	The compactness of $\partial\RR(0)$ implies that,  for all $r\in(s_1,s_2)$, there exists $w_r\in\partial \RR(0)$ 
	such that $\abst(-D_z\hat{\II}(r,\hat{z}(r)),\partial\RR(0))=\|-D_z\hat{\II}(r,\hat{z}(r))-w_r\|$. From this we conclude
	\begin{align*}
		\langle -D_z\hat\II(r,\hat z(r)),\hat{z}'(r)\rangle
		& = \langle -D_z\hat\II(r,\hat z(r))-w_r,\hat{z}'(r)\rangle+\langle w_r,\hat{z}'(r)\rangle\\
		& \leq \| -D_z\hat\II(r,\hat z(r))-w_r\| \, \|\hat{z}'(r)\|+\langle w_r,\hat{z}'(r)\rangle\\
		& \leq \|\hat{z}'(r)\|\abst(-D_z\hat{\II}(r,\hat{z}(r)),\partial\RR(0))+\RR(\hat z'(r)),
	\end{align*}
	where we used $w_r \in \partial\RR(0)$ for the last inequality.
	Now integrating over $(s_1,s_2)$ yields
	\begin{align*}
		& \int_{s_1}^{s_2} \|\hat{z}'(r)\|\abst(-D_z\hat{\II}(r,\hat{z}(r)),\partial\RR(0))+\RR(\hat z'(r)) \, dr\\
		& \quad \geq \int_{s_1}^{s_2} \langle -D_z\hat\II(r,\hat z(r)),\hat{z}'(r)\rangle \, dr \\
		& \quad = \int_{s_1}^{s_2}\langle -D_z\EE(\hat z(r)),\hat z'(r)\rangle +\langle \hat{\ell}(r),\hat{z}'(r)\rangle \, dr\\
		& \quad = \EE(\hat z(s_1))-\EE(\hat z (s_2))+ \int_0^s \langle \hat{\ell}(r),\hat{z}'(r)\rangle \, dr,
	\end{align*}
	which is \eqref{eq:ygrene}.
\end{proof}

\begin{lemma}\label{lem:Rstar}
	Let $\RR:\R^d\to[0,\infty)$ comply with \eqref{R1}--\eqref{R3} and $\tau>0$ be given.
    Then there holds
    \begin{equation*}
        \tau \, \abst(\eta,\partial\RR(0)) = \sup_{v\in \R^d, \, \|v\|\leq \tau} \big( \dual{\eta}{v} - \RR(v) \big) 
    \end{equation*}
    for all $\eta \in \R^d$.
\end{lemma}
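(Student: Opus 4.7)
My plan is to reduce the identity to a minimax exchange. The key structural fact is that $\RR$, being proper, convex, lower semicontinuous and positively $1$-homogeneous, coincides with the support function of $\partial\RR(0)$, i.e.\
\begin{equation*}
    \RR(v) = \sup_{w\in\partial\RR(0)}\langle w,v\rangle \quad \forall\, v\in\R^d.
\end{equation*}
This is a classical consequence of Fenchel duality: for a sublinear lsc function $\RR$ the conjugate $\RR^*$ is the indicator of a closed convex set $C$, and $C$ coincides with $\partial\RR(0)$; applying biconjugation and $\RR^{**}=\RR$ yields the display above. Moreover, assumption \eqref{R3} implies $\partial\RR(0)\subset\{w:\|w\|\leq C\}$, so $\partial\RR(0)$ is convex and compact.

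With this representation, I would rewrite the integrand of the supremum as
\begin{equation*}
    \langle\eta,v\rangle-\RR(v) = \langle\eta,v\rangle - \sup_{w\in\partial\RR(0)}\langle w,v\rangle = \inf_{w\in\partial\RR(0)}\langle\eta-w,v\rangle.
\end{equation*}
Hence
\begin{equation*}
    \sup_{\|v\|\leq\tau}\big(\langle\eta,v\rangle-\RR(v)\big) = \sup_{\|v\|\leq\tau}\inf_{w\in\partial\RR(0)}\langle\eta-w,v\rangle.
\end{equation*}
Now I would invoke Sion's minimax theorem: $\{v\in\R^d:\|v\|\leq\tau\}$ is convex and compact, $\partial\RR(0)$ is convex and compact, and the bilinear function $(v,w)\mapsto\langle\eta-w,v\rangle$ is linear (hence concave/convex and continuous) in each variable separately. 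Swapping sup and inf and evaluating the inner supremum of a linear functional over the ball of radius $\tau$ gives
\begin{equation*}
    \inf_{w\in\partial\RR(0)}\sup_{\|v\|\leq\tau}\langle\eta-w,v\rangle = \inf_{w\in\partial\RR(0)}\tau\|\eta-w\| = \tau\,\abst(\eta,\partial\RR(0)),
\end{equation*}
which is the claimed identity.

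I expect no real obstacle here; the only points requiring care are (i) ensuring that $\RR$ is indeed the support function of $\partial\RR(0)$, which uses only \eqref{R1} and \eqref{R2} together with $\RR(0)=0$ and biconjugation, and (ii) checking the hypotheses of Sion's theorem, where compactness of $\partial\RR(0)$ (provided by \eqref{R3}) is the relevant ingredient. As a stylistic alternative, one may avoid Sion's theorem by a direct $1$-homogeneity argument: writing $v=\lambda u$ with $\lambda\in[0,\tau]$, $\|u\|=1$, one obtains $\sup_{\|v\|\leq\tau}(\langle\eta,v\rangle-\RR(v)) = \tau\,\max\{0,\sup_{\|u\|\leq 1}(\langle\eta,u\rangle-\RR(u))\}$, and then the remaining supremum is shown to equal $\abst(\eta,\partial\RR(0))$ via the same support-function identity and elementary convex-analysis manipulations (separation of $\eta$ from $\partial\RR(0)$ when $\eta\notin\partial\RR(0)$, with the value trivially $0$ when $\eta\in\partial\RR(0)$).
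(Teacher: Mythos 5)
Your argument is correct and takes a genuinely different route from the paper. The paper identifies $\sup_{\|v\|\leq\tau}\big(\langle\eta,v\rangle-\RR(v)\big)$ as the Fenchel conjugate $\big(\RR+I_{\overline{B_\tau(0)}}\big)^*(\eta)$ and then evaluates it with the infimal-convolution formula for conjugates of sums, using $\RR^*=I_{\partial\RR(0)}$ and $I_{\overline{B_\tau(0)}}^*=\tau\|\cdot\|$; the distance appears directly as the resulting infimum. You instead write $\RR$ as the support function of $\partial\RR(0)$ (the biconjugate counterpart of the same fact $\RR^*=I_{\partial\RR(0)}$), turn the objective into $\sup_v\inf_w\langle\eta-w,v\rangle$, and swap the extrema via Sion's (or, since both sets are compact convex and the pairing is bilinear, already von Neumann's) minimax theorem before computing the inner supremum over the ball. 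The paper's route is a one-line application of a black-box duality formula, while yours is slightly longer but more self-contained, exposing the underlying minimax structure; both hinge on the same identification of $\RR^*$ with the indicator of $\partial\RR(0)$ and on the compactness of $\partial\RR(0)$ from \eqref{R3}. Your closing ``stylistic alternative'' essentially only reduces the claim to $\tau=1$ and still needs the same support-function/distance argument, so it is more of a normalization than an independent proof; the Sion route is the one that stands on its own.
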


\begin{proof}
    Let us denote the indicator functional of a set $M \subset \R^d$ by $I_M$, i.e.,
    \begin{equation*}
        I_M(z) := \begin{cases} 0, &\text{if}~ z \in M, \\  \infty, &\text{else.} \end{cases}
    \end{equation*}        
    Then the inf-convolution formula from \cite[Prop. 3.4]{Attouch} yields
	\begin{equation}\label{eq:infconv}
		\big(\RR+I_{\overline{B_\tau(0)}}\big)^*(\eta)=\inf_{v\in\R^d} \Big(\RR^*(v)+I_{\overline{B_\tau(0)}}^*(\eta-v)\Big),
	\end{equation}
	where $B_\tau(0)$ denotes the ball of radius $\tau$ centered at zero.
	The conjugate of $I_{\overline{B_\tau(0)}}$ is given by $I_{\overline{B_\tau(0)}}^*(\cdot) = \tau\|\cdot\|$, while the positive homogeneity of $\RR$ implies 
    $\RR^* = I_{\partial\RR(0)}$.
	Inserted in \eqref{eq:infconv}, we obtain
	\begin{equation*}
		\big(\RR+I_{\overline{B_\tau(0)}}\big)^*
		=\inf_{v\in \R^d} I_{\partial\RR(0)}(v)+\tau\|\eta-v\|=\tau \inf_{v\in\partial\RR(0)}\|\eta-v\| =\tau \abst(\eta,\partial\RR(0)),
	\end{equation*}
	which is the claim.
\end{proof}

\end{appendix}

\bibliographystyle{acm} \bibliography{literatur}

\end{document}